\newtheorem{theorem}{Theorem}[section]
\newtheorem*{theoremp}{Theorem}
\newtheorem{lemma}[theorem]{Lemma}
\newtheorem{claim}[theorem]{Claim}
\newtheorem{corollary}[theorem]{Corollary}
\newtheorem*{problem}{Problem}
\newtheorem{question}[theorem]{Question}
\numberwithin{figure}{section}
\newtheorem*{rep@theorem}{\rep@title}
\newcommand{\newreptheorem}[2]{
\newenvironment{rep#1}[1]{
 \def\rep@title{#2 \ref{##1}}
 \begin{rep@theorem}}
 {\end{rep@theorem}}}
\newcommand{\C}{\mathcal{C}}
\newcommand{\Z}{\mathbb{Z}}
\newcommand{\rr}{\mathds{R}}
\title{Measure Partitions Using Hyperplanes with Fixed Directions}
\author{Roman N. Karasev\thanks{Supported by the Dynasty foundation.} \and Edgardo Rold\'an-Pensado \and Pablo Sober\'on }
\begin{document}

\maketitle

\begin{abstract}
We study nested partitions of $R^d$ obtained by successive cuts using hyperplanes with fixed directions.  We establish the number of measures that can be split evenly simultaneously by taking a partition of this kind and then distributing the parts among $k$ sets.  This generalises classical necklace splitting results and their more recent high-dimensional versions.  With similar methods we show that in the plane, for any $t$ measures there is a path formed only by horizontal and vertical segments using at most $t-1$ turns that splits them by half simultaneously, and optimal mass-partitioning results for chessboard colourings of $R^d$ using hyperplanes with fixed directions.
\end{abstract}

\section{Introduction}

Mass partitioning results are a cornerstone of the interaction between topology and discrete geometry.  Besides being interesting geometric problems in their own right, they are highly motivated by their applications.  We mention a few key examples.

In discrete geometry, the recent polynomial partitioning-type results of Guth and Katz were used as their first step to nearly solve Erd\H{o}s' distinct distance problem \cite{Guth:tu} (see \cite{Sheffer:2014vy} for the state of the art on the subject). In combinatorics, one striking example is the solution to the necklace splitting problem \cite{Alon:1987ur, Goldberg:1985jr} which will be described precisely later on.  Mass partitions are also motivated by their applications to geometric range queries \cite{Matousek:1994kq}.

The quintessential measure partitioning result is the Ham Sandwich theorem.

\begin{theoremp}[Ham Sandwich]
Given $d$ probability measures $\mu_1, \mu_2, \dots, \mu_d$ in $\rr^d$, there is a hyperplane which simultaneously splits all of them evenly.
\end{theoremp}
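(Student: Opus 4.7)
The plan is to reduce the statement to the Borsuk--Ulam theorem by parametrizing the space of oriented affine hyperplanes of $\rr^d$ by the sphere $S^d$. To each unit vector $v=(v_0,v_1,\dots,v_d)\in S^d$ I would associate the closed half-space $H_v^+=\{x\in\rr^d : v_1x_1+\cdots+v_dx_d\geq v_0\}$ and its complementary half-space $H_v^-$; the antipode $-v$ then yields the same hyperplane with the opposite orientation, so $H_{-v}^+=H_v^-$. The two ``north/south pole'' parameters $v=(\pm 1,0,\dots,0)$ correspond to the degenerate situation in which one half-space is all of $\rr^d$ and the other is empty.

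Next I would define $f\colon S^d\to\rr^d$ by
\[
f(v)=\bigl(\mu_1(H_v^+)-\mu_1(H_v^-),\ \dots,\ \mu_d(H_v^+)-\mu_d(H_v^-)\bigr).
\]
By construction $f(-v)=-f(v)$, so $f$ is an odd continuous map from $S^d$ to $\rr^d$. The Borsuk--Ulam theorem then provides a point $v^*\in S^d$ with $f(v^*)=0$, i.e.\ $\mu_i(H_{v^*}^+)=\mu_i(H_{v^*}^-)$ for every $i=1,\dots,d$. Because the two half-spaces cover $\rr^d$ and overlap only on the hyperplane $\partial H_{v^*}^+$, the corresponding hyperplane bisects each $\mu_i$, which is precisely the conclusion sought.

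The main obstacle is the continuity of $f$. For measures absolutely continuous with respect to Lebesgue measure the map is clearly continuous, and the degenerate parameters give $f=(\pm 1,\dots,\pm 1)$ which fits the odd extension smoothly. For general Borel probability measures one must worry about mass carried on the hyperplane itself, which can cause $v\mapsto \mu_i(H_v^+)$ to be only upper semicontinuous. The standard fix is to approximate each $\mu_i$ by its convolution with a Gaussian of variance $\varepsilon$, apply the argument to obtain bisecting hyperplanes $H_{v_\varepsilon^*}$, and then pass to a limit $\varepsilon\to 0$ by compactness of $S^d$; a small compactification-style argument shows the limiting hyperplane (possibly at infinity, but then trivially bisecting) still works. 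Once continuity is handled, the proof is essentially a one-line invocation of Borsuk--Ulam.
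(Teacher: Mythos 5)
The paper quotes the Ham Sandwich theorem as a classical result and gives no proof of it, so there is nothing internal to compare against; your argument is the standard test-map proof via Borsuk--Ulam --- parametrize oriented half-spaces by $S^d$ with the poles as degenerate half-spaces, form the odd vector of differences, and invoke Borsuk--Ulam --- which is exactly the scheme the paper itself uses (e.g.\ for Theorem~\ref{theorem-partitiones-planas}), and it is correct for the measures the paper restricts to, namely those absolutely continuous with respect to Lebesgue measure, for which continuity of $f$ (including at the poles, by tightness) is immediate and the zero cannot occur at a pole since $f=(\pm1,\dots,\pm1)$ there. The only imprecision is the parenthetical in your final paragraph: a limiting hyperplane ``at infinity'' would not bisect anything, but uniform tightness of the smoothed measures rules out such a limit, and in any case the Gaussian-approximation step is unnecessary under the paper's standing absolute-continuity assumption.
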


There are some additional conditions imposed on the measures, namely that they vanish on every hyperplane.  For simplicity, in what follows we only consider measures which are absolutely continuous with respect to the Lebesgue measure of the space in question.  When the space of partitions is compact, approximation arguments can usually extend the results to larger families of functions.  However, one has to take additional steps to ensure that no new problems arise in doing so.

In general, the type of results which we are interested in are solutions to instances of the following problem:

\begin{problem}
Let $\mathcal{H}$ be a family of partitions of $\rr^d$ and $\mathcal{F}$ be a set of families of measures.  Under what conditions can we guarantee that for every $F \in \mathcal{F}$ there is a partition in $\mathcal{H}$ which splits $F$ in a prescribed manner?
\end{problem}

This can be thought as a way of testing the complexity of $\mathcal{H}$.  For instance, which families $\mathcal{H}$ satisfy that they can simultaneously split any $k$ measures in $\rr^d$ evenly?

In this paper, we study families of partitions formed by successive hyperplane cuts, where the directions hyperplanes and other parameters are fixed in advance.  With cuts by one hyperplane moving freely in $\rr^d$, the ham sandwich shows that any $d$ measures can be split.  However, if the direction of the hyperplane is fixed, only one measure can be guaranteed to be split.  Thus, the complexity of the partition must be increased in some other way.

In Section \ref{section-neckalces}, we describe a class of families of nested partitions by hyperplane cuts with prescribed directions that can split large numbers of measures at the same time.  The results in that section are closely related to the classic necklace splitting problem.

\begin{theoremp}[Necklace theorem, Hobby and Rice 1965 \cite{Hobby:1965bh}]
Given $t$ probability measures $\mu_1, \mu_2, \dots, \mu_t$ in $\rr$, there is a partition of $\rr$ using $t$ points such that the resulting parts may be distributed among two sets $A$ and $B$ with
\[
\mu_j (A) = \mu_j (B) = \frac12
\]
for all $j$.
\end{theoremp}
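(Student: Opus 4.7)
The plan is to reduce the statement to the Borsuk--Ulam theorem via a standard encoding of signed partitions by points of a sphere. First I would fix a bounded interval $[a,b]$ containing the supports of all the $\mu_j$. Given $x = (x_0, x_1, \dots, x_t) \in S^t$, define cut points $p_i(x) = a + (b-a)\sum_{k<i} x_k^2$ for $0 \le i \le t+1$; since $\sum_k x_k^2 = 1$, these cut $[a,b]$ into $t+1$ consecutive (possibly degenerate) intervals $I_i(x) = [p_i(x), p_{i+1}(x)]$. The sign of $x_i$ dictates where the interval goes: put $I_i(x)$ into $A(x)$ when $x_i \ge 0$ and into $B(x)$ when $x_i < 0$. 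This single object $x \in S^t$ therefore encodes both a $t$-point partition and an $A/B$ assignment of its parts.

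Next I would define the odd test map $f = (f_1, \dots, f_t) \colon S^t \to \rr^t$ by $f_j(x) = \mu_j(A(x)) - \mu_j(B(x))$. Continuity of $f$ reduces to the observation that although the rule ``sign of $x_i$'' is discontinuous at $x_i = 0$, the interval $I_i(x)$ degenerates to a point there, and therefore contributes measure zero by the absolute continuity hypothesis; so each summand of $f_j$ is continuous across $\{x_i = 0\}$. Oddness is immediate from the parameterization: flipping $x$ to $-x$ preserves every $|x_i|$ and hence every $I_i(x)$, while reversing every nonzero sign, so $A(-x)$ and $B(-x)$ coincide with $B(x)$ and $A(x)$ up to the measure-zero intervals indexed by $\{i : x_i = 0\}$. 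Hence $f(-x) = -f(x)$.

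The Borsuk--Ulam theorem then supplies some $x^* \in S^t$ with $f(x^*) = 0$, which is precisely $\mu_j(A(x^*)) = \mu_j(B(x^*)) = \tfrac12$ for every $j$. The partition is realised by the $t$ cut points $p_1(x^*), \dots, p_t(x^*)$ (some possibly coinciding when certain $x^*_i$ vanish, in which case strictly fewer cuts suffice).

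The only real subtlety is the continuity check at the loci $\{x_i = 0\}$, and it is resolved entirely by absolute continuity; there is no substantial obstacle beyond choosing the parameterization so that both oddness and continuity follow automatically from the bookkeeping of the $x_i^2$.
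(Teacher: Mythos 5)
Your proposal is correct and is essentially the paper's own scheme (the one used in Section \ref{section-paths} and, for general $k$, Section \ref{section-neckalces}): parameterize a cut sequence together with the $A/B$ assignment by a point of a sphere (you use squared coordinates, the paper uses the $\ell_1$-sphere of the octahedron, which is the same idea), check the test map is odd and continuous because degenerate intervals carry zero measure, and invoke Borsuk--Ulam. The only caveat is your unjustified assumption that all $\mu_j$ have support in a bounded interval $[a,b]$; this is routine to remove by letting the cut points range over the extended line $[-\infty,\infty]$ parameterized by $[0,1]$ (as the paper does), since finite measures vanishing on points still give continuity of interval masses at $\pm\infty$.
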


Notice that if the measures are not absolutely continuous, but instead each is distributed evenly among an even number of points, the problem is completely combinatorial.  The discrete version was first proved by Goldberg and West \cite{Goldberg:1985jr} using topological methods, and a different proof was obtained by Alon and West \cite{Alon:1985cy}. The setting of the problem is usually described as a problem of two thieves who stole an (open) necklace with pearls of $t$ types and wish to split it evenly (i.e. each gets the same number of each kind of pearl) among themselves by cutting the necklace and distributing the resulting pieces.  However, they want to use the minimum number of cuts possible.  This was later improved by Alon by solving the same problem with $k$ thieves \cite{Alon:1987ur}.

\begin{theoremp}[Alon 1987]
Given $t$ probability measures $\mu_1, \mu_2, \ldots, \mu_t$ in $\rr$, there is a partition of $\rr$ using $t(k-1)$ points such that the resulting parts may be distributed among $k$ sets $A_1, A_2, \ldots, A_k$ so that
\[
\mu_j (A_i) = \frac1k
\]
for all $i,j$.
\end{theoremp}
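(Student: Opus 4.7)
The plan is to prove the theorem first for $k = p$ prime by an equivariant-topological obstruction argument in the spirit of the Borsuk--Ulam theorem, and then bootstrap to composite $k$ by induction on its prime factorization. Assume without loss of generality that all measures are supported on $[0,1]$.

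For the prime case, I would parametrize labeled partitions by the $N$-fold join $X = (\Z/p)^{*N}$ with $N := t(p-1)+1$. A point of $X$ has the form $\sum_{j=1}^N \lambda_j g_j$ with $\lambda_j \ge 0$, $\sum \lambda_j = 1$, and labels $g_j \in \Z/p$ attached to those indices with $\lambda_j > 0$; I interpret the $\lambda_j$ as the consecutive lengths of $N$ intervals of $[0,1]$ and the $g_j$ as assigning each interval to one of the $p$ sets $A_1, \dots, A_p$. The space $X$ has dimension $t(p-1)$, is $(t(p-1)-1)$-connected, and carries a free diagonal $\Z/p$-action by cyclic shift of labels. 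I then define the test map
\[
\phi : X \to V^t, \qquad V = \{y \in \rr^p : y_1 + \cdots + y_p = 0\},
\]
whose $(j,i)$-coordinate is $\mu_j(A_i) - 1/p$. This map is continuous, $\Z/p$-equivariant, and vanishes exactly on the desired partitions. If no such partition existed, $\phi$ would descend to a $\Z/p$-equivariant map into the unit sphere $S(V^t)$, which has dimension $t(p-1) - 1$. But a Borsuk--Ulam-type theorem (for instance, a Fadell--Husseini index comparison, or Volovikov's theorem) asserts that no $\Z/p$-equivariant map $(\Z/p)^{*N} \to S^{N-2}$ exists, giving the required contradiction.

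For composite $k$, factor $k = p \cdot m$ with $p$ prime and proceed by strong induction on $k$. First apply the prime case to split each $\mu_j$ into $p$ equal parts with $t(p-1)$ cuts, producing groups $G_1, \ldots, G_p$ with $\mu_j(G_i) = 1/p$ for all $i,j$. Then, for each $G_i$ separately, apply the inductive statement with $m$ sets to the probability measures $\{p\cdot\mu_j|_{G_i}\}_{j=1}^{t}$, using $t(m-1)$ additional cuts that can be chosen inside $G_i$. The resulting $pm = k$ pieces satisfy the required condition, and the total number of cuts is
\[
t(p-1) + p \cdot t(m-1) = t(pm - 1) = t(k-1).
\]

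The main obstacle is the topological step in the prime case: verifying that the equivariant index of $(\Z/p)^{*N}$ strictly exceeds the dimension of the target sphere. This is the only place where the primality of the acting group is essential; the reduction from $k$ to its prime factors is elementary arithmetic, and the definition of the configuration space and test map is routine once the join model is in place.
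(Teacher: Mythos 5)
Your proposal is correct and follows essentially the same route as the paper: the prime case is the $d=1$ instance of the paper's Theorem \ref{teorema-dimension-grande}, proved via the join configuration space $[k]^{*\,t(k-1)+1}$ and a Dold-type (or \"Ozaydin/Volovikov) equivariant obstruction, and your reduction of composite $k$ by splitting into $p$ groups and recursing inside each is exactly the subdivision argument in the proof of Theorem \ref{teorema-collaresgrandes}, with the same cut count $t(p-1)+p\cdot t(m-1)=t(k-1)$. The only cosmetic difference is that the paper treats prime powers directly via \"Ozaydin before subdividing, while you subdivide down to primes, which changes nothing essential.
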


In both cases, the number of points used is optimal.  In Section \ref{section-neckalces} we generalise this result to $\rr^d$, where the cuts are made with hyperplanes with fixed directions.  This further generalises a $d$-dimensional version by \v{Z}ivaljevi\'c and de Longueville of the theorem above \cite{deLongueville:2006uo}, where the partition was made using hyperplanes with fixed directions, but only choosing out of $d$ possible directions.  In the case when $k$ is a prime power, we also optimise on the number of parts in the resulting partition, which has not been done before.

When dealing with a fair distribution problem, there are two aspects to take care in the partition.  One is the topological complexity of the space of partitions involved, and the other is the combinatorial complexity.  The methods by de Longueville and \v{Z}ivaljevi\'c optimised the topological complexity, as the number of hyperplane cuts they used was the best possible.  In our partitions, we optimise also the combinatorial complexity.  Given the hyperplanes that make up the partition, we only allow a small number of ways to distribute the parts.  Moreover, as the space of partitions considered is slightly different, we use simpler methods than the ones needed in \cite{deLongueville:2006uo}.

There is yet another variation of the Necklace splitting theorem by \v{Z}ivaljevic \cite{Zivaljevic:2013vla} which involves partitions of simplices induced by conical $(d-1)$-dimensional faces induced by a point in the interior of the simplex and some $(d-2)$-dimensional polytopal sphere in the boundary of the simplex.  In Section \ref{section-karasev} we show a unified topological approach that generalises the results in Section \ref{seccion-ajedrez} and \cite{Zivaljevic:2013vla}, thus containing or implying all known high-dimensional versions of the necklace splitting theorem.

In the partitions mentioned, we have no control over how the distribution is made.  However, when $k = 2$, given a partition of $\rr^d$ induced by hyperplanes there is a natural way to split the sections of the partition.  Namely, we colour the space like a chessboard, where every two parts that share a boundary of dimension $(d-1)$ go to different sets.  For this to be possible, we do not consider nested partitions, but allow all hyperplanes to extend completely.  Similar results appear in \cite{Alon:1985cy}.  In Section \ref{seccion-ajedrez} we describe mass partitioning results of this kind where we give more freedom to the hyperplanes determining the partition.

Another example of a mass partition  result is the polynomial version of the ham sandwich theorem by Stone and Tukey \cite{Stone:1942hu}.  In the case when $d=2$, it says that polynomials of degree $k-1$ are enough to split evenly any $k$ measures.
\begin{theoremp}[Stone, Tukey 1942]
Given $t$ probability measures $\mu_1, \mu_2, \ldots, \mu_t$ in $\rr^2$, there are is a polynomial $p$ of degree at most $t-1$ so that 
\[
\mu_i ( \{(x,y): y > p(x)\} ) = \frac12
\]
for all $i$.
\end{theoremp}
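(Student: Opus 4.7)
The plan is to reduce the statement to the Ham Sandwich theorem via a Veronese-type lifting, exploiting that a polynomial $p(x)$ of degree at most $t-1$ has exactly $t$ coefficients, matching the $t$ measures to be bisected.

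Define the lift $\phi \colon \rr^2 \to \rr^t$ by $\phi(x,y) = (x, x^2, \ldots, x^{t-1}, y)$, and let $\nu_i = \phi_* \mu_i$ denote the pushforward of each $\mu_i$ to $\rr^t$. Although each $\nu_i$ is supported on the $2$-dimensional surface $\phi(\rr^2)$, I would verify that $\nu_i$ still vanishes on every hyperplane of $\rr^t$: a hyperplane $H = \{c_0 + c_1 z_1 + \cdots + c_t z_t = 0\}$ pulls back under $\phi$ to the set $\{(x,y) : c_0 + c_1 x + \cdots + c_{t-1} x^{t-1} + c_t y = 0\}$, which is either the graph of a polynomial in $x$ (when $c_t \neq 0$) or a finite union of vertical lines (when $c_t = 0$), so in either case it has planar Lebesgue measure zero. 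This permits the Ham Sandwich theorem to be applied in $\rr^t$ to $\nu_1, \ldots, \nu_t$, producing a hyperplane $H$ that simultaneously bisects them.

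When the leading coefficient $c_t$ is nonzero, dividing through yields the desired region $\{y > p(x)\}$ with $p(x) = -(c_0 + c_1 x + \cdots + c_{t-1} x^{t-1})/c_t$, a polynomial of degree at most $t-1$; the orientation can be flipped by negating all $c_j$ if needed. The main obstacle is the degenerate case $c_t = 0$, where the Ham Sandwich hyperplane corresponds to a union of vertical lines rather than a polynomial graph. I expect to handle this by a perturbation argument: slightly rotating each $\mu_i$ generically rules out vertical-line bisectors and forces $c_t \neq 0$, producing polynomials $p_\varepsilon$ of degree at most $t-1$, after which extracting a convergent subsequence as the perturbation tends to zero yields the desired polynomial $p$, provided the coefficients remain bounded. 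Proving this boundedness is the subtle point, and may require a more direct Borsuk--Ulam argument on the sphere $S^t$ parametrizing the coefficients $(c_0, \ldots, c_t)$ together with an analysis of the stratum $\{c_t = 0\}$.
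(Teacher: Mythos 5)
The paper does not actually prove this statement; it is quoted as classical background from Stone and Tukey, so I can only judge your argument on its own merits. Your reduction is the standard one: the Veronese-type lift $\phi(x,y)=(x,x^2,\ldots,x^{t-1},y)$, the observation that each pushforward $\nu_i$ vanishes on hyperplanes of $\rr^t$ because hyperplanes pull back to polynomial graphs or finite unions of vertical lines (null sets for absolutely continuous $\mu_i$), and then the Ham Sandwich theorem in $\rr^t$. All of that is correct, and it proves that some set of the form $\{(x,y): c_t y + c_{t-1}x^{t-1}+\cdots+c_1 x + c_0 > 0\}$, with $(c_1,\ldots,c_t)\neq 0$, has $\mu_i$-measure $1/2$ for every $i$.

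The gap is exactly the one you flag, and it cannot be closed by the perturbation-and-compactness scheme you propose. Take $t=2$ and let $\mu_1,\mu_2$ be uniform measures on disks of radius $1/10$ centred at $(0,0)$ and $(0,5)$. A half-plane $\{y>ax+b\}$ has measure $1/2$ in a uniform disk only if the line $y=ax+b$ passes through the centre, and no non-vertical line passes through both centres; the unique common bisecting line is $x=0$, i.e.\ the stratum $c_t=0$ is forced. If you rotate the measures by a small angle $\varepsilon$ you do get bisecting lines $y=a_\varepsilon x+b_\varepsilon$, but $a_\varepsilon\to\infty$ as $\varepsilon\to 0$, so the coefficients are unbounded and the limit is the vertical line, not a graph: the ``subtle point'' you identify (boundedness of coefficients) genuinely fails. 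The conclusion is that the statement in the literal graph form $\{y>p(x)\}$ is false without further hypotheses; what your argument (or equivalently a Borsuk--Ulam argument on the sphere of coefficient vectors $(c_0,\ldots,c_t)$) actually establishes is the correct Stone--Tukey conclusion, namely a bisecting zero set of a nonzero linear combination of $1,x,\ldots,x^{t-1},y$, which may degenerate to a union of vertical lines when $c_t=0$. So you should either prove and state that version, or add a genericity assumption on the measures (e.g.\ no union of at most $t-1$ vertical lines together with the direction at infinity supports a common bisector) under which the stratum $c_t=0$ can be excluded.
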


In Section \ref{section-paths}, we give a ``fixed directions'' version of this result.  This gives a positive answer to a conjecture by Mikio Kano, communicated to the authors by Ruy Fabila-Monroy.  Namely, we prove that just like polynomials of degree at most $t-1$, paths along two directions with at most $t-1$ turns can split any $t$ measures.  We present this result before the high-dimensional ones because the parametrisation of the space of partitions is much simpler and motivates the constructions used later.

\begin{reptheorem}{teorema-caminons-introduccion}
Let $t$ be a positive integer.  For any $t$ measures in $\rr^2$, there is a path formed by only horizontal and vertical segments with at most $t-1$ turns that splits $\rr^2$ into two sets of equal size in each measure.
\end{reptheorem}

Moreover, the path that we obtain is $y$-monotone.  One downside from our construction is that the path may go ``through infinity'' in the horizontal direction several times.  The case when $t=2$ was solved previously in~\cite{UNO:2009wk}.

It should be noted that the number of turns is optimal.  In order to see this, it suffices to concentrate each measure $\mu_i$ near a point $x_i$, so that no two points $x_i, x_j$ share a coordinate.  The path must essentially go through each point, and between consecutive points it needs at least one turn.  The proof of this result is contained in Section \ref{section-paths}.  It should be noted that there are similar results by Sergey Bereg \cite{Bereg:2009ch}, where it is shown that any two measures in the plane may be equipartitioned into $t$ parts simultaneously, where the boundaries for the sections of the equipartition are the union of at most $t-1$ vertical or horizontal segments.  In Bereg's results, the sections do not need to be convex.

Finally, in Section \ref{section-negative} we mention some examples of partitions similar to the ones we use that cannot split as many measures as their degrees of freedom.  This shows that the precise structure of our results cannot be removed altogether.

As mentioned at the beginning of the paper, the methods used are inescapably topological.  The most common proof scheme of this kind of results is what is known as the \emph{test map scheme}.  The objective is to parametrise the space of partitions by a space $X$, and map it by a function $f$ to a space $Y$ related to the way we are splitting the measures.  Ideally, the symmetries of the problem should induce actions of a group $G$ on both spaces so that $f$ is equivariant (i.e. $f(gx) = g \cdot f(x)$).  Then, the problem of the existence of certain partitions is related to the problem of the non-existence of certain equivariant maps.

For instance, let $t, k$ be positive integers, $[k] = \{1,2, \ldots, k\}$ and $n= t(k-1)+1$.  Let $\Sigma_k$ the symmetric group of permutations of $k$ elements.  The topological $n$-fold join 

\[
EG^{n-1}:= \underbrace{[k] * [k] * \cdots * [k]}_n
\]
 is an $(n-2)$-connected topological space with a natural action of $\Sigma_k$.  Namely, for a permutation $\sigma \in \Sigma_k$ and $p = \alpha_1 m_1 + \alpha_2 m_2 + \ldots + \alpha_n m_n \in EG^{n-1}$, we define $\sigma (p) = \alpha_1 \sigma (m_1) + \alpha_2 \sigma (m_2) + \ldots + \alpha_n \sigma (m_n)$.

Also, notice that the space $\rr^{k-1} = \{ (x_1, x_2, \ldots, x_k): x_1 + x_2 + \ldots + x_k = 0\}$ has an action of $\Sigma_k$, which simply permutes its coordinates.  This induces an action on $\rr^{n-1} = \rr^{t(k-1)} = \rr^{k-1} \oplus \rr^{k-1} \oplus \ldots \oplus \rr^{r-1}$.

The main topological tool needed for sections \ref{section-neckalces} and \ref{section-karasev} is the following theorem.

\begin{theoremp}[\"Ozaydin, 1987 \cite{Oza87}]
Let $k,t$ be positive integers and $n=t(k-1)+1$.  If $k$ is a prime power, then for every $\Sigma_k$-equivariant map
\[
f: EG^{n-1} \to \rr^{n-1}
\]
there is an $x_0$ such that $f(x_0) = 0$.  Moreover, if $k$ is not a prime power, there are maps $f$ as above whose image does not contain $0$.
\end{theoremp}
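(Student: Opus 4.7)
The plan is to convert the statement into a question about the existence of certain equivariant maps and then handle the two directions with rather different techniques. Suppose $f : EG^{n-1} \to \rr^{n-1}$ is $\Sigma_k$-equivariant. If $f(x) \ne 0$ everywhere, normalising gives a $\Sigma_k$-equivariant map $\hat f : EG^{n-1} \to S(\rr^{n-1}) \cong S^{n-2}$. Here $\rr^{n-1} = V^{\oplus t}$, where $V$ is the standard (sum-zero) permutation representation of $\Sigma_k$. Note that $EG^{n-1}$ is $(n-2)$-connected and $(n-1)$-dimensional, so the primary obstruction to such a map lives in a single top-dimensional equivariant cohomology class and is exactly what must be analysed.

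For $k = p^m$ a prime power, the key is to restrict the action to the elementary abelian subgroup $A = (\Z/p)^m \le \Sigma_k$, embedded by identifying $[k]$ with $A$ so that $A$ acts on $[k]$ by left translation. This action is free, and therefore the induced $A$-action on the join $EG^{n-1} = [k]^{*n}$ is free as well. I would then compare Fadell--Husseini indices with $\mathbb{F}_p$-coefficients. The index of the free $A$-space $EG^{n-1}$ contains no classes below degree $n-1$, while the index of the $A$-sphere $S(V^{\oplus t})$ is generated by the Euler class $e(V^{\oplus t}|_A) \in H^{n-1}_A(\mathrm{pt}; \mathbb{F}_p)$. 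Since $V|_A$ is the reduced regular representation of $A$, its Euler class is a product of all non-trivial characters of $A$, which is a non-zero Dickson-type invariant in the polynomial ring $H^*_A(\mathrm{pt}; \mathbb{F}_p)$; hence $e(V^{\oplus t}|_A) \ne 0$. The existence of $\hat f$ would force this class into the index of $EG^{n-1}$, yielding a contradiction and proving that $f$ must have a zero.

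For the negative direction, when $k$ is not a prime power I would construct an equivariant map $EG^{n-1} \to \rr^{n-1}$ avoiding the origin. The starting point is that if $k$ is divisible by two distinct primes, every Sylow $p$-subgroup of $\Sigma_k$ acts on $[k]$ with at least one fixed point (a $p$-group acting on a set whose cardinality is not a power of $p$ must fix something). Consequently the restriction of the equivariant Euler class $e(V^{\oplus t})$ to each Sylow subgroup vanishes, and the standard transfer/norm argument then shows that the $\Sigma_k$-equivariant Euler class itself vanishes. Equivariant obstruction theory on the $(n-2)$-connected source $EG^{n-1}$, which has exactly one potential obstruction class in dimension $n-1$, then produces the desired equivariant map $\hat f : EG^{n-1} \to S(V^{\oplus t}) \subset \rr^{n-1} \setminus \{0\}$.

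The hardest step I expect is the Euler-class calculation in the prime power case: one has to identify $V|_A$ as the reduced regular representation of $A$, handle the distinction between $p=2$ and $p$ odd in $H^*_A(\mathrm{pt}; \mathbb{F}_p)$, and verify that the product of non-trivial characters is non-zero modulo $p$. Once that identification is in hand, both halves of the theorem follow from comparing equivariant cohomology rings; the challenge is almost entirely in making the mod-$p$ Euler-class bookkeeping explicit.
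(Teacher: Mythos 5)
The paper does not actually prove this statement: it is quoted from \"Ozaydin's preprint, and the only hint given is that for $k=p^m$ it suffices to restrict to the $(\mathbb{Z}_p)^m$-action. Your prime-power half does exactly that, and it is correct in outline: identify $[k]$ with $A=(\mathbb{Z}_p)^m$ acting on itself, so $V|_A$ is the reduced regular representation, note the $A$-action on $EG^{n-1}$ is free, and compare Fadell--Husseini indices, using that $e(V^{\oplus t}|_A)=e(V|_A)^t$ is a nonzero product of (Bocksteins of) nontrivial characters in the polynomial part of $H^*_A(\mathrm{pt};\mathbb{F}_p)$. One small correction: you need the index of $EG^{n-1}$ to vanish in degrees up to \emph{and including} $n-1$, since the Euler class sits exactly in degree $n-1$; this does hold because $EG^{n-1}$ is $(n-2)$-connected, but ``no classes below degree $n-1$'' is not quite the statement you need.

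The genuine gap is in the non-prime-power half. Your key claim --- that every Sylow $p$-subgroup of $\Sigma_k$ has a fixed point on $[k]$ because ``a $p$-group acting on a set whose cardinality is not a power of $p$ must fix something'' --- is false: a $p$-group must fix a point when the cardinality is prime to $p$, not merely when it is not a $p$-power. For $k=6$, a Sylow $2$-subgroup of $\Sigma_6$ has orbits of sizes $4$ and $2$, and a Sylow $3$-subgroup has two orbits of size $3$; neither has a fixed point, and these primes dividing $k$ are exactly the ones where you need the vanishing. The step can be repaired: what you really need is only that a $p$-subgroup $P$ cannot be \emph{transitive} on $[k]$ when $k$ is not a $p$-power (orbit sizes are $p$-powers), so $P$ has at least two orbits, hence $(V^{\oplus t})^P\neq 0$; a constant map to a $P$-fixed point of $S(V^{\oplus t})$ is then $P$-equivariant, so the restriction of the obstruction to every Sylow subgroup vanishes, and the transfer argument together with $\gcd_p\,[\Sigma_k:P_p]=1$ (over all primes $p\le k$) kills the obstruction. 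You should also note that the $\Sigma_k$-action on $EG^{n-1}$ is not free, so ``one obstruction class'' must be read in Bredon-cohomology obstruction theory, which applies here because every isotropy subgroup of the join pointwise fixes a nonempty subset of $[k]$ and hence has nonempty fixed-point set on the target sphere. With these repairs your sketch becomes essentially \"Ozaydin's argument, but as written the Sylow fixed-point step fails.
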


For the proof of this theorem, if $k=p^m$, then it is sufficient to consider the $\left( \mathbb{Z}_p \right)^m$-action in these spaces to show the existence of a zero.  It should be noted that even though \cite{Oza87} is an unpublished preprint, the results have been used and reproved several times, for instance in \cite{Volovikov:1996up}.

In the case when $k$ is a prime number instead of a prime power, a simpler proof of Theorem \ref{teorema-dimension-grande} can be obtain by using the following theorem by Dold instead of the result by \"Ozaydin.

\begin{theoremp}[Dold's theorem \cite{Dold:1983wr}]
Let $G$ be a finite group, $|G|>1$, $X$ be an $n$-connected space with a free action of $G$, and $Y$ be a (paracompact) topological space of dimension at most $n$ with a free action of $G$. Then there is no $G$-equivariant map $f:X \rightarrow Y$.
\end{theoremp}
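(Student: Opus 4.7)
The plan is to reduce Dold's theorem to the case $G = \Z/p$ for a prime $p$ dividing $|G|$, and then derive a contradiction in mod-$p$ cohomology via the Borel construction. The key input is that the principal $G$-bundle $X \to X/G$ is classified by a map $X/G \to BG$ whose connectivity is controlled by the $n$-connectedness of $X$, while $Y/G$ is too low-dimensional to accommodate the resulting cohomological obstruction.

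First I would reduce to $G = \Z/p$. Pick $g \in G$ of prime order $p$ by Cauchy's theorem; the cyclic subgroup $\langle g \rangle$ still acts freely on both $X$ and $Y$, and any $G$-equivariant map is automatically $\langle g \rangle$-equivariant, so it suffices to prove the theorem for $G = \Z/p$. The trivial case $n=0$ is handled separately by observing that a continuous equivariant map from a path-connected $X$ to a $0$-dimensional $Y$ must be constant, which contradicts freeness. Assume $n \geq 1$ from now on.

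Assuming for contradiction a $G$-equivariant $f:X\to Y$, I pass to orbit spaces. Freeness turns $X \to X/G$ and $Y \to Y/G$ into principal $G$-bundles with classifying maps $c_X, c_Y : X/G, Y/G \to BG$, and equivariance of $f$ forces $c_X \simeq c_Y \circ \bar f$, where $\bar f : X/G \to Y/G$ is induced by $f$. The Borel construction identifies $X/G \simeq EG \times_G X$, which fits into a homotopy fibre sequence
\[
X \longrightarrow X/G \xrightarrow{\,c_X\,} BG.
\]
Since $X$ is $n$-connected, the rows $1 \leq t \leq n$ of the Serre spectral sequence $E_2^{s,t} = H^s(BG; H^t(X; \mathbb{F}_p)) \Rightarrow H^{s+t}(X/G; \mathbb{F}_p)$ vanish; a short dimension count then shows that no nontrivial differential enters or leaves $E_2^{n+1,0}$, so that $c_X^{*}:H^{n+1}(BG;\mathbb{F}_p)\hookrightarrow H^{n+1}(X/G;\mathbb{F}_p)$ is injective.

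Now choose a nonzero class $\alpha \in H^{n+1}(B\Z/p; \mathbb{F}_p)$, available because $H^{*}(B\Z/p; \mathbb{F}_p)$ is nonzero in every degree. Then $c_X^{*}(\alpha) \neq 0$. On the other hand, the factorisation $c_X^{*} = \bar f^{*} \circ c_Y^{*}$ gives $c_X^{*}(\alpha) = \bar f^{*}(c_Y^{*}(\alpha))$, with $c_Y^{*}(\alpha) \in H^{n+1}(Y/G; \mathbb{F}_p)$. Since $Y$ has covering dimension at most $n$ and $G$ is a finite group acting freely, $Y/G$ also has dimension at most $n$, so $H^{n+1}(Y/G; \mathbb{F}_p) = 0$ and $c_X^{*}(\alpha)$ must be zero, a contradiction.

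The main obstacle to anticipate is the transition from the clean CW setting to general paracompact $Y$: one must verify that $Y/G$ has covering dimension at most $n$ and that Čech (or sheaf) cohomology vanishes above the covering dimension, and one uses equivariant CW approximation on $X$ to justify the Borel construction and the spectral sequence computation. These are standard technicalities rather than the heart of the proof; the essential mechanism is the dimension-versus-connectivity clash captured by the injection $c_X^{*}$ on $H^{n+1}$.
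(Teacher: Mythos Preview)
The paper does not prove Dold's theorem; it is quoted as a known result with a citation to \cite{Dold:1983wr} and to \cite{matousek2003using}, and is used only as a black box (in the remark following Theorem~\ref{teorema-dimension-grande}). There is therefore no proof in the paper to compare your attempt against.

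That said, your outline is the standard argument and is essentially correct. The reduction to $G=\Z/p$ via Cauchy's theorem, the Borel fibration $X\to X/G\to BG$, the vanishing of rows $1\le t\le n$ in the Serre spectral sequence with $\mathbb{F}_p$ coefficients giving injectivity of $c_X^*$ on $H^{n+1}$, and the dimension bound $\dim(Y/G)\le n$ forcing $H^{n+1}(Y/G;\mathbb{F}_p)=0$, together yield the contradiction. One small point worth making explicit: to conclude that $\dim(Y/G)\le n$ you are using that a free action of a finite group on a Hausdorff space makes $Y\to Y/G$ a covering map, so covering dimension is preserved locally; and you should be clear that the vanishing $H^{n+1}(Y/G;\mathbb{F}_p)=0$ is in \v{C}ech (or sheaf) cohomology, which is the theory in which the classifying map argument and the dimension bound both live for general paracompact spaces.
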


One should note that when $X$ is an $n$-dimensional sphere and $Y$ is an $(n-1)$-dimensional sphere, both equipped with their natural $\Z_2$ action, the theorem above is simply the Borsuk--Ulam theorem.  For a description of these topological tools along their applications to discrete geometry and combinatorics, we recommend \cite{matousek2003using}.

\section{Paths using two directions}\label{section-paths}

In this section we show that a certain family of partitions of $\rr^2$ can be used to split evenly and simultaneously several measures.  Some of the resulting partitions are shown to come from $y$-monotone paths that use few number of turns.

Let $M = (m_1, m_2, \ldots, m_n)$ be a $(0,1)$-vector of length $n$.  We define $\mathcal{C}_M$ as the space of partitions of $\rr^2$ into two sets $A$ and $B$ which can be obtained in the following way.
\begin{itemize}
    \item First, choose numbers $-\infty=y_0\le y_1\le \dots \le y_{n-1}\le y_n=\infty$ and divide $\rr^2$ into the strips $S_i=\{(x,y):y_{i-1}\le y \le y_{i}\}$ for $i=1,2\dots,n$.
    \item If $m_i=0$, choose whether $S_i\subset A$ or $S_i\subset B$.
    \item If $m_i=1$, choose $-\infty\le x_i\le\infty$ and divide $S_i$ into $S_i^L=\{(x,y)\in S_i:x\le x_i\}$ and $S_i^R=\{(x,y)\in S_i:x\ge x_i\}$.  Then choose whether $S_i^L\subset A$ and $S_i^R\subset B$ or $S_i^L\subset B$ and $S_i^R\subset A$.
\end{itemize}

There is a natural action of $\Z_2$ acting on $\mathcal{C}_M$ given by $-(A,B) = (B,A)$. Note that when $y_i=y_{i+1}$, the corresponding strip has zero width. If additionally $y_i=\pm\infty$, then the strip is empty.

We give $\mathcal{C}_M$ the topology induced by the possible values of the variables $y_i$ and $x_i$. However, we consider each $x_i$ to be taken in $S^1$ so that the case $x_i=-\infty$ with $S_i^R\subset A$ (resp. $S_i^L\subset B$) is the same as the case $x_i=\infty$ with $S_i^L\subset A$ (resp. $S_i^R\subset B$). Furthermore, we identify any two partitions where the only difference is the choice of $x_i$ in the strips $S_i$ with empty interior.

Given a $(0,1)$-vector $M$, we denote by $w(M)$ the number of symbols $1$ in it.  Also, we denote by $S^n$ the boundary of the $(n+1)$-dimensional octahedron in $\rr^{n+1}$, which is homeomorphic to a $n$-dimensional sphere.  Namely, let
\[
S^k = \{ (x_1, x_2, \ldots, x_{k+1}) : |x_1|+|x_2|+\ldots + |x_{k+1}| = 1\},
\]
together with the usual action of $\Z_2$ on it.

\begin{lemma}\label{lem:topo}
Let $M$ be a $(0,1)$-vector of length $n$, and let $t= n+ w(M)$. Then there is a $\Z_2$-equivariant homeomorphism between $\mathcal{C}_M$ and $S^{t-1}$.
\end{lemma}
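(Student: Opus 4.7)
The plan is to exhibit $\mathcal{C}_M$ as an iterated topological join
\[
\mathcal{C}_M \;\cong\; X_1 * X_2 * \cdots * X_n,
\]
where $X_i$ records the internal configuration of strip $S_i$. I take $X_i = S^0$ (the two choices $S_i \subset A$ or $S_i \subset B$) when $m_i = 0$, and $X_i = S^1$ (the cut $x_i$ together with the orientation of the $A/B$ split, subject to the stated identifications) when $m_i = 1$. Since $S^a * S^b = S^{a+b+1}$, iterating gives
\[
X_1 * \cdots * X_n \;=\; S^{m_1} * \cdots * S^{m_n} \;=\; S^{(n-1)+w(M)} \;=\; S^{t-1},
\]
with the $\Z_2$-action swapping $A \leftrightarrow B$ acting antipodally on each factor and therefore antipodally on the join.

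First I would verify that $X_i \cong S^1$ when $m_i = 1$. The parameter $x_i$ ranges over $[-\infty,+\infty]$, and each choice of $x_i$ comes with an orientation ($S_i^L \subset A$ or $S_i^L \subset B$). This gives two closed intervals indexed by orientation. The identifications in the definition glue $(x_i=-\infty, S_i^L\subset A)$ to $(x_i=+\infty, S_i^L\subset B)$ and $(x_i=-\infty, S_i^L\subset B)$ to $(x_i=+\infty, S_i^L\subset A)$, which is exactly an antipodal gluing of the endpoints of the two intervals. The resulting space is a circle, on which the orientation-swap $\Z_2$-action is free and antipodal.

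Next, the $y_i$'s supply the join parameters. Fix once and for all a homeomorphism $\phi\colon[-\infty,+\infty]\to[0,1]$ (for example $\phi(y)=\tfrac{1}{2}+\tfrac{1}{\pi}\arctan y$) and set $\lambda_i := \phi(y_i)-\phi(y_{i-1})$; then $\lambda_i \ge 0$ and $\sum_i \lambda_i = 1$. Sending a partition with data $(y_i, x_i, \text{signs})$ to the formal join combination $\sum_i \lambda_i \xi_i$, where $\xi_i \in X_i$ encodes the internal configuration of strip $S_i$, yields the candidate map $\mathcal{C}_M \to X_1 * \cdots * X_n$. It is manifestly continuous and $\Z_2$-equivariant, and a continuous inverse is obtained by reversing each step of the construction.

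The main obstacle, and the only real content beyond setting up the join, is checking that the identifications defining $\mathcal{C}_M$ match those in the join. In the join, the factor $\xi_i$ is forgotten whenever $\lambda_i = 0$; this corresponds precisely to the stipulation that configurations with empty strips ($y_{i-1}=y_i$) are identified regardless of the internal choice in $S_i$. The identifications at $x_i=\pm\infty$ are already absorbed into the identification $X_i \cong S^1$ established above. Once these boundary cases are matched, the bijection is evidently bicontinuous and $\Z_2$-equivariant, and the lemma follows.
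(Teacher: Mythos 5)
Your proof is correct and follows essentially the same route as the paper: both realise $\mathcal{C}_M$ as a join of single-strip configuration spaces ($S^0$ for $m_i=0$, $S^1$ for $m_i=1$) and conclude via $S^a * S^b \cong S^{a+b+1}$, with the $\Z_2$-action acting antipodally on each factor. The only difference is presentational --- you write the $n$-fold join at once with explicit join coordinates $\lambda_i=\phi(y_i)-\phi(y_{i-1})$, while the paper obtains the same decomposition by induction through the homeomorphism $\mathcal{C}_{M*N}\cong \mathcal{C}_M * \mathcal{C}_N$ (shrinking and stacking the two sub-partitions vertically).
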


\begin{proof}
We prove this lemma by induction on the length of $M$.  Clearly, $\mathcal{C}_{( 0)} \cong S^0$ and $\mathcal{C}_{(1) }\cong S^1$ by homoemorphisms preserving the $\Z_2$ action.  Given two vectors $M$ and $N$, we denote by $M * N$ the vector formed by $M$ followed by $N$.

We will show that the space $\mathcal{C}_{M*N}$ is homeomorphic to the topological join $\mathcal{C}_M * \mathcal{C}_N$.  In order to simplify the proof, we can parametrize the extended real numbers $[-\infty, \infty]$ by the interval $[0,1]$, so that we can consider $\mathcal{C}_M$ as a space of partitions of the square $[0,1] \times [0,1]$ instead of $\rr^2$.

Given, $x \in \mathcal{C}_M$, $y \in \mathcal{C}_n$ and $s \in [0,1]$, we assign a partition of $\C_{M*N}$ to the element $sx + (1-s)y \in \C_M * \C_N$.  In order to do this, simply shrink the partition $x$ vertically by a factor of $s$ and use it as a partition of the rectangle $ [0,1]\times [1-s,1]$.  Then shrink the partition $y$ by a factor of $1-s$ and use it as a partition of the rectangle $ [0,1] \times [0,1-s] $.  This mapping is clearly an equivariant homeomorphism between the two spaces.

It is known that for any non-negative integers $l,m$ we have $S^l * S^m \cong S^{l+m+1}$, completing the proof.
\end{proof}

\begin{theorem}\label{theorem-partitiones-planas}
Let $M$ be a $(0,1)$-vector of length $n$, and $t= n+w(m)$.  For any $t-1$ probability measures $\mu_1, \mu_2, \ldots, \mu_{k-1}$ in $\rr^2$ which vanish on any line, there exists $(A,B) \in \C_M$ such that
\[
\mu_i (A) = \mu_i (B) = 1/2
\]
for all $i$.
\end{theorem}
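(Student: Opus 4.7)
The plan is a textbook test-map argument built on top of Lemma~\ref{lem:topo}. First I would define the test map
\[
f \colon \mathcal{C}_M \to \rr^{t-1}, \qquad f(A,B) = \bigl(\mu_1(A)-\mu_1(B),\ \ldots,\ \mu_{t-1}(A)-\mu_{t-1}(B)\bigr).
\]
A zero of $f$ is exactly a partition that bisects all $t-1$ measures simultaneously, so the theorem reduces to showing that $f$ has a zero. By Lemma~\ref{lem:topo} we may regard $f$ as a $\Z_2$-equivariant map $S^{t-1} \to \rr^{t-1}$, where $\Z_2$ acts antipodally on the sphere and by $v \mapsto -v$ on $\rr^{t-1}$ (equivariance is immediate from $f(B,A) = -f(A,B)$). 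Once continuity of $f$ is established, the classical Borsuk--Ulam theorem (the case $G = \Z_2$ of Dold's theorem stated in the introduction) produces a point where $f$ vanishes, which concludes the proof.

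The only nontrivial step is therefore the continuity of $f$ on $\mathcal{C}_M$, and this is where the identifications built into the definition of $\mathcal{C}_M$ do the real work. I would verify continuity at each of the two types of degenerate configurations. First, when $y_{i-1} = y_i$ the strip $S_i$ has zero two-dimensional Lebesgue measure, so every $\mu_j$ assigns it mass $0$ (here the hypothesis that the measures vanish on every line is used, since we may have $y_{i-1} = y_i$ finite); hence the choice of cut $x_i$ inside $S_i$, or the choice of which side $S_i$ joins, does not affect $f$, matching the identifications we make in $\mathcal{C}_M$. Second, when $x_i = \pm\infty$, one of the two halves $S_i^L, S_i^R$ becomes empty, and the identification that glues $x_i = +\infty$ with $x_i = -\infty$ after swapping the labels $L \leftrightarrow R$ is exactly what is needed for $f$ to be well-defined and continuous across this boundary. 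Outside these degenerate loci continuity of $f$ follows from standard continuity of the measure of a region under continuous deformation of its defining parameters, again using absolute continuity (or at least the vanishing on lines) of the $\mu_j$.

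With $f$ continuous and $\Z_2$-equivariant, applying Borsuk--Ulam to $S^{t-1} \to \rr^{t-1}$ yields an $(A,B) \in \mathcal{C}_M$ with $f(A,B) = 0$, i.e.\ $\mu_j(A) = \mu_j(B) = 1/2$ for every $j = 1,\ldots,t-1$. The expected main obstacle is purely a bookkeeping one: checking that the identifications described before the statement of Lemma~\ref{lem:topo} (the $x_i \in S^1$ convention and the collapsing of empty-interior strips) give both a Hausdorff sphere structure \emph{and} a continuous test map. Everything else is a routine application of the equivariant topology developed in the introduction.
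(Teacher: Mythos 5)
Your proposal is correct and follows essentially the same route as the paper: the paper also builds the test map (with coordinates $\mu_i(A)-\tfrac12$, which differs from your $\mu_i(A)-\mu_i(B)$ only by a factor of $2$), invokes Lemma~\ref{lem:topo} to identify $\mathcal{C}_M$ with $S^{t-1}$, and concludes by the Borsuk--Ulam theorem. Your extra discussion of continuity at the degenerate configurations is a detail the paper leaves implicit, not a different approach.
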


\begin{proof}
Consider the function
\begin{align*}
f: \C_M &\rightarrow \rr^{t-1} \\
(A,B) &\mapsto \left(\mu_1 (A) - \frac12, \mu_2 (A) -\frac12, \ldots, \mu_{t-1} (A) - \frac12 \right)
\end{align*}
It is clear that $f(A,B) = -f(B,A)$. Since $\C_M \cong S^{t-1}$, by the Borsuk--Ulam theorem there is a partition $(A,B)\in \C_M$ such that $f(A,B) = 0$, as desired.
\end{proof}

In order to tackle the problem with paths, we are interested in $(0,1)$-vectors $M$ such that partitions in $\C_M$ can also be obtained by using a path with few turns.
We denote by $1_t$ the vector formed of exactly $t$ entries $1$, and by $1_t * 0$ the vector formed by $t$ entries $1$ followed by a single entry $0$.  We say that a path is \emph{stair-like} if it is $y$-monotone, uses only vertical and horizontal segments and is not self-intersecting. Notice that we allow the path to go through infinity in the horizontal direction.  We also allow some of the vertical segments to be at infinity.

\begin{figure}[htc]
\centering
\includegraphics{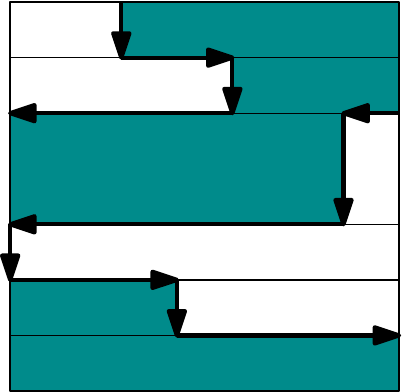}
\caption{Example of a partition induced by $(1,1,1,1,1,0)$ and its corresponding path.}
\label{fig-camino}
\end{figure}

The following lemma is not difficult and is left to the reader (see Figure \ref{fig-camino}).

\begin{lemma}\label{lemma-caminos}
Any partition induced by the vector $1_t$ can be also obtained by a stair-like path using at most $2t-2$ turns, and any partition induced by the vector $1_t*0$ can be induced by a stair-like path using at most $2t-1$ turns.
\end{lemma}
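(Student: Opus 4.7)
The plan is to explicitly construct the path from the data of the partition. Every $(A,B) \in \C_{1_t}$ is specified by horizontal cut levels $-\infty = y_0 \le y_1 \le \dots \le y_{t-1} \le y_t = +\infty$, vertical cuts $x_i$ in each strip $S_i$, and labels $\epsilon_i \in \{L,R\}$ recording on which side of $x_i$ the set $A$ sits inside $S_i$. I would take the path to consist of the vertical segment $\{x_i\} \times [y_{i-1}, y_i]$ in each of the $t$ strips, joined at each $y_i$ (for $1 \le i \le t-1$) by a horizontal connector between $(x_i, y_i)$ and $(x_{i+1}, y_i)$. The resulting curve is $y$-monotone by construction, and self-intersection is ruled out because the horizontal connectors lie at pairwise distinct heights.

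The key choice is the direction of each horizontal connector: the short arc between $x_i$ and $x_{i+1}$, or the arc going ``through infinity'' on the compactified $x$-axis. A four-case analysis on $(\epsilon_i, \epsilon_{i+1})$ shows that when $\epsilon_i = \epsilon_{i+1}$ the $A/B$ boundary at $y = y_i$ is precisely the direct segment between $x_i$ and $x_{i+1}$, while when $\epsilon_i \neq \epsilon_{i+1}$ it consists of the complementary pair of horizontal rays, which under the identification $-\infty = +\infty$ form a single connected arc. In either case the connector contributes exactly two turns (or zero if $x_i = x_{i+1}$), so summing over the $t-1$ connectors gives at most $2(t-1) = 2t-2$ turns. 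The case $M = 1_t * 0$ is handled identically on $S_1, \dots, S_t$; since $S_{t+1}$ is entirely $A$ or entirely $B$, I would attach one final horizontal ray from $(x_t, y_t)$ to infinity in the direction forced by $\epsilon_t$ together with the assignment of $S_{t+1}$. This adds one extra turn, giving the bound $2t-1$.

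The main conceptual point is the ``through infinity'' case: in $\rr^2$ the $A/B$ boundary at $y = y_i$ is then genuinely two disjoint horizontal rays, and one has to recognize that after identifying $x = -\infty$ with $x = +\infty$ (as built into the definition of $\C_M$), these two rays constitute a single stair-like segment with exactly two turn endpoints. Once this is granted the rest is routine bookkeeping, and degenerate cases (such as $x_i = x_{i+1}$ or coincident $y$-levels) only decrease the turn count.
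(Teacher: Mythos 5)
Your construction is correct and is precisely the argument the paper leaves to the reader (it only points to Figure \ref{fig-camino}): take the vertical cut $\{x_i\}\times[y_{i-1},y_i]$ in each strip and join consecutive cuts at height $y_i$ by the horizontal arc that actually forms the $A/B$ boundary there, which is the direct segment when the side-labels agree and the complementary arc through infinity when they disagree, giving two turns per junction and one extra turn for the trailing $0$-strip. Your case analysis, the turn count of $2t-2$ (resp.\ $2t-1$), and the treatment of degeneracies match the intended proof, so nothing further is needed.
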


Combining Theorem \ref{theorem-partitiones-planas} and Lemma \ref{lemma-caminos}, we immediately obtain the result mentioned in the introduction.

\begin{theorem}\label{teorema-caminons-introduccion}
Given $k$ probability measures $\mu_1, \mu_2, \ldots, \mu_t$, there is a stair-like path using at most $t-1$ turns which splits $\rr^2$ into two sets $A,B$ so that
\[
\mu_i (A) = \mu_i (B) = \frac12
\]
for all $i$.
\end{theorem}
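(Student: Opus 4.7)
The plan is to apply Theorem~\ref{theorem-partitiones-planas} to a carefully chosen $(0,1)$-vector $M$ and then invoke Lemma~\ref{lemma-caminos} to rewrite the resulting partition as a stair-like path. Recall that Theorem~\ref{theorem-partitiones-planas} with a vector of length $n$ and weight $w(M)$ equipartitions $n + w(M) - 1$ measures, so in order to handle all $t$ measures I want to pick $M$ with $n + w(M) = t+1$. Meanwhile Lemma~\ref{lemma-caminos} bounds the number of turns of the associated path in terms of the shape of $M$. Choosing $M$ according to the parity of $t$ will make the two estimates fit perfectly.

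First I would treat the even case $t = 2s$. Here I take $M = 1_s * 0$, which has $n = s+1$ and $w(M) = s$, so that $n + w(M) = 2s+1 = t+1$. Theorem~\ref{theorem-partitiones-planas} then yields a partition $(A,B) \in \C_M$ with $\mu_i(A) = \mu_i(B) = 1/2$ for every $i$, and Lemma~\ref{lemma-caminos} realizes this partition as a stair-like path with at most $2s-1 = t-1$ turns.

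Next I would do the odd case $t = 2s+1$. Here I take $M = 1_{s+1}$, of length $s+1$ and weight $s+1$, so $n + w(M) = 2s+2 = t+1$. Again Theorem~\ref{theorem-partitiones-planas} produces a partition in $\C_M$ equalizing all $t$ measures, and Lemma~\ref{lemma-caminos} expresses it via a stair-like path with at most $2(s+1)-2 = 2s = t-1$ turns.

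The only real obstacle is a matter of bookkeeping: matching the two available vector shapes in Lemma~\ref{lemma-caminos} to the parity of $t+1$ so that the bound from Theorem~\ref{theorem-partitiones-planas} hits $t$ exactly. All the substantive content has already been absorbed into those two results, namely the Borsuk--Ulam argument (via the $\mathbb{Z}_2$-equivariant homeomorphism $\C_M \cong S^{t-1}$) behind Theorem~\ref{theorem-partitiones-planas}, and the geometric dictionary between partitions in $\C_M$ and stair-like paths recorded in Lemma~\ref{lemma-caminos}.
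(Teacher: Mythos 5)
Your proposal is correct and follows exactly the paper's route: the paper also obtains the theorem by combining Theorem~\ref{theorem-partitiones-planas} with Lemma~\ref{lemma-caminos}, leaving the parity bookkeeping (choosing $M=1_s*0$ for $t=2s$ and $M=1_{s+1}$ for $t=2s+1$) implicit, which you have simply written out explicitly and correctly.
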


Going through infinity is necessary if the path is required to be $y$-monotone. It is easy to construct four measures concentrated around points that require this. We say that a path is \emph{bounded} if it does not go through infinity.

\begin{question}
Given any $t$ measures $\mu_1, \mu_2, \ldots, \mu_k$ in the plane, is there a bounded path formed by only vertical and horizontal segments, using at most $t-1$ turns that splits each measure by half?  If so, can the path be required to be non-self-intersecting?
\end{question}

\section{Partitions in high dimensions}\label{section-neckalces}

The partitions used in the previous section can be generalised inductively to higher dimensions.  In this section we describe families of partitions formed by nested hyperplane cuts which can split simultaneously large numbers of partitions.

It should be noted that partitions which are the result of nested power diagrams have provided generalisations of the ham sandwich theorem, dividing measures evenly into more than one part \cite{Karasev:2014gi, Soberon:2012kp}.  However, in this case we wish to restrict the directions of the cutting hyperplanes.

For this purpose we construct, recursively on $n$, sets $V^n_d$ which dictate how we are allowed to partition $\rr^d$.

\begin{itemize}
    \item The set $V^0_d$ is simply $\{ \emptyset \}$, where $\emptyset$ is the empty set.
    \item For $n >0$, $V^n_d$ is the set of triples $(v, Y_1, Y_2)$ where $v$ is a direction in $\rr^d$, and there is a non-negative $j \le n-1$ such that $Y_1 \in V^j_d$ and $Y_2 \in V^{n-1-j}_d$.
\end{itemize}

To each $Y \in V^n_d$, we assign a space $\mathcal{C}^n_Y$ of partitions of $\rr^d$ with some additional structure ($\mathcal{C}^n_Y$ is in fact an ordered multiset).  For $n=0$, $\C^n_{\emptyset}$ contains only the trivial partition of $\rr^d$ into one set.  For $n > 0$, and $Y = (v, Y_1, Y_2)$, the set $\C^n_Y$ contains the partitions $\mathcal P$ that can be formed in the following way:

First, take a hyperplane $H$ orthogonal to $v$ (possibly at $+\infty$ or $-\infty$) and split $\rr^d$ into two sets using $H$.  Let $H^+ = \{ h + \alpha v : h \in H, \alpha \ge 0\}$ and $H^- = \{ h - \alpha v : h \in H, \alpha \ge 0\}$.  Let $j$ be the non-negative integer such that $Y_1 \in V^j_d$ and $Y_2 \in V^{n-j-1}_d$.  Then, take two partitions $\mathcal{P}_1 \in \C^j_{Y_1}$ and $\mathcal{P}_2 \in \C^{n-j-1}_{Y_2}$.  The sets $K_1 \cap H^+$ and $K_2 \cap H^-$ where $K_1$ ranges over all sets in $\mathcal{P}_1$ and $K_2$ ranges over all sets in $\mathcal{P}_2$ form the partition $\mathcal P$.
Since $\mathcal P$ is determined by $H$, $\mathcal{P}_1$ and $\mathcal{P}_2$, we may describe it by the triple $(H, \mathcal{P}_1, \mathcal{P}_2)$.

It should be stressed that, if $H$ is not at $\pm \infty$, we distinguish between $(H, \mathcal{P}_1, \mathcal{P}_2)$ and $(H, \mathcal{P}'_1, \mathcal{P}'_2)$ as long as $( \mathcal{P}_1, \mathcal{P}_2) \neq ( \mathcal{P}'_1, \mathcal{P}'_2)$, even if the actual partitions are the same.  If $H$ is at $+\infty$ (i.e. $H^+$ is empty) we identify $(H, \mathcal{P}_1, \mathcal{P}_2)$ with $(H, \mathcal{P}'_1, \mathcal{P}_2)$ for all $\mathcal{P}_1, \mathcal{P}'_1 \in \C^j_{Y_1}$.  Analogously, if $H$ is at $-\infty$ we identify $(H, \mathcal{P}_1, \mathcal{P}_2)$ with $(H, \mathcal{P}_1, \mathcal{P}'_2)$ for all $\mathcal{P}_2, \mathcal{P}'_2 \in \C^{n-j-1}_{Y_2}$.  This is so that we can recursively equip $\C^n_Y$ with a topology that makes it homeomorphic to $\C^j_{Y_1} * \C^{n-j-1}_{Y_2}$.

\begin{figure}[htc]
\centering
\includegraphics{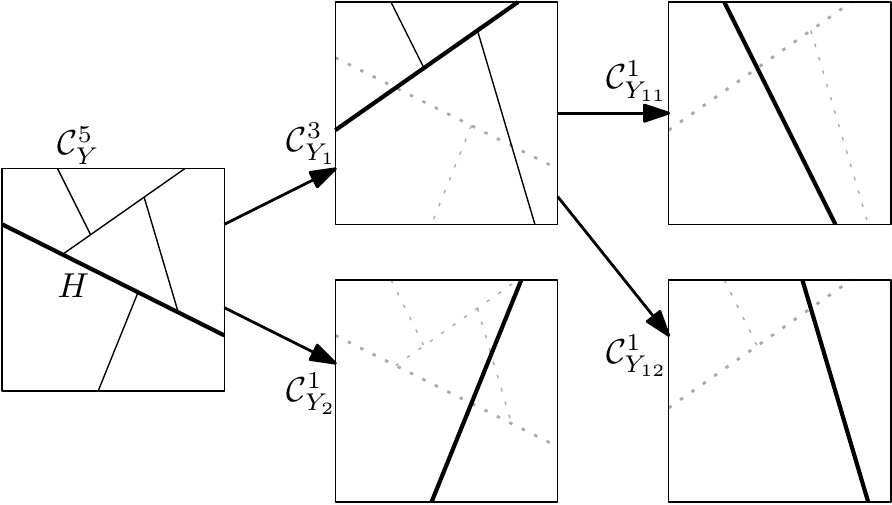}
\caption{Example of a partition induced by some $Y \in V^5_2$ and its recursive definition.  If these lines are translated independently, as long as they cut the same regions, they still give partitions in $\mathcal C^5_Y$.}
\label{fig-V5}
\end{figure}

For $Y \in V^n_d$, all partitions of $\C^n_Y$ split $\rr^d$ into $n+1$ convex sets, some possibly empty (see Figure \ref{fig-V5}).  Moreover, they can be made by successively cutting one of the remaining parts by a hyperplane, using $n$ hyperplanes in total.  For instance, if all the directions involved in $Y$ are parallel, we obtain a partition by parallel hyperplanes.

Given a positive integer $k$, we label each of the $n+1$ parts of a partition of $\C^n_Y$ with an element of $[k] = \{1,2,\ldots, k\}$.  This can be thought of as distributing the parts of the partition among $k$ thieves. We denote by $\C^n_{Y,k}$ the space of pairs $(\mathcal P,\ell)$ where $\mathcal P$ is a partition in $\C^n_Y$ and $\ell:\mathcal P\to [k]$ is such a labelling. Once again, when $H$ is at $+\infty$, we identify $(H, \mathcal{P}_1, \mathcal{P}_2)$ with $(H, \mathcal{P}'_1, \mathcal{P}_2')$ whenever $\mathcal{P}_2=\mathcal{P}_2'$ and the labelling of the sets contained in $H^-$ coincides for both partitions.
The analogous identifications are also made when $H$ is at $-\infty$.

Even though different elements of $V^n_d$ may give wildly different types of partitions, the topology of $\C^n_{Y,k}$ for $Y \in V^n_d$ is surprisingly easy to describe.  For this, we use a set $G$ of $k$ elements with the discrete topology.

\begin{lemma}
For a non-negative integer $n$, and $Y \in V^n_d$, the space $\C^{n}_{Y,k}$ is homeomorphic to $EG^n$, the $(n+1)$-fold join of $G$.
\end{lemma}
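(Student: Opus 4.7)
The plan is to proceed by induction on $n$, mirroring exactly the recursive structure used to define both $V^n_d$ and $EG^n$, and in close analogy with the proof of Lemma \ref{lem:topo}.

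For the base case $n=0$, the set $V^0_d = \{\emptyset\}$ and $\C^0_\emptyset$ consists of the single trivial partition of $\rr^d$. A labelling $\ell$ is then just a choice of one of the $k$ elements of $G = [k]$, so $\C^0_{\emptyset,k}$ is canonically identified with the $k$-point discrete space $G = EG^0$.

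For the inductive step, fix $n>0$ and $Y = (v, Y_1, Y_2)$ with $Y_1 \in V^j_d$ and $Y_2 \in V^{n-1-j}_d$. By the inductive hypothesis, $\C^j_{Y_1,k} \cong EG^j$ and $\C^{n-1-j}_{Y_2,k} \cong EG^{n-1-j}$. Since joins associate and $EG^j * EG^{n-1-j} = EG^n$, it suffices to exhibit a homeomorphism
\[
\C^n_{Y,k} \;\cong\; \C^j_{Y_1,k} * \C^{n-1-j}_{Y_2,k}.
\]
To construct it, parameterise the position of the cutting hyperplane $H$ orthogonal to $v$ by $s \in [0,1]$, where $s=0$ places $H$ at $-\infty$ (so $H^- = \emptyset$) and $s=1$ places $H$ at $+\infty$ (so $H^+ = \emptyset$). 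A triple $(H, \mathcal{P}_1, \mathcal{P}_2) \in \C^n_{Y,k}$ then corresponds to the join point $s \mathcal{P}_1 + (1-s)\mathcal{P}_2$, where $\mathcal{P}_1 \in \C^j_{Y_1,k}$ labels the cells in $H^+$ and $\mathcal{P}_2 \in \C^{n-1-j}_{Y_2,k}$ labels the cells in $H^-$. Continuity and bijectivity on the interior $0 < s < 1$ are immediate from the definition of the topology on $\C^n_{Y,k}$.

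The only step requiring genuine care is verifying that the identifications at the two endpoints $s=0,1$ agree with the collapses built into the topological join. In the join, at $s=1$ the $\mathcal{P}_2$ coordinate is collapsed, and at $s=0$ the $\mathcal{P}_1$ coordinate is collapsed. This is exactly what the excerpt prescribes for $\C^n_{Y,k}$: when $H$ is at $+\infty$ we identify $(H, \mathcal{P}_1, \mathcal{P}_2)$ with $(H, \mathcal{P}'_1, \mathcal{P}_2)$ whenever the label data on $H^-$ matches, and symmetrically at $-\infty$. Since $\C^j_{Y_1,k}$ records precisely this label data on $H^+$ and $\C^{n-1-j}_{Y_2,k}$ the data on $H^-$, the identifications coincide. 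A standard verification that the bijection is continuous in both directions (using the compactness of the parameter interval $[0,1]$ and the recursive topologies) completes the argument; this bookkeeping at infinity is the main technical obstacle, but it is exactly parallel to the treatment of $y_i = \pm\infty$ in Lemma \ref{lem:topo}.
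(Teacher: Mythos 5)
Your proof takes exactly the paper's route: induction on $n$ with base case $\C^0_{\emptyset,k}\cong G=EG^0$, the key identification $\C^n_{Y,k} \cong \C^j_{Y_1,k} * \C^{n-1-j}_{Y_2,k}$, and associativity of the topological join (the paper states this in two lines, citing the analogy with Lemma \ref{lem:topo}; you supply the details it leaves implicit). One small orientation slip in your write-up: with your convention that $s=1$ puts $H$ at $+\infty$ (so $H^+=\emptyset$), the join point should be $(1-s)\mathcal{P}_1 + s\,\mathcal{P}_2$ rather than $s\,\mathcal{P}_1+(1-s)\mathcal{P}_2$, so that the coordinate collapsed in the join is $\mathcal{P}_1$, the one labelling the empty side; as written the endpoint identifications do not match (the map is not well defined at $H=\pm\infty$), but the fix is immediate and does not affect the argument.
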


\begin{proof}
As before, it suffices to note that for $n=0$, the result holds.  For positive $n$ and $Y = (v, Y_1, Y_2)$, let $k$ be the integer such that $Y_1 \in V^j_d$ and $Y_2 \in V^{n-1-j}$.  Noticing that $\C^n_{Y,k} \cong \C^j_{Y_1,k} * \C^{n-j-1}_{Y_2,k}$ and that the topological join is associative completes the proof by induction.
\end{proof}

Moreover, we can consider the $\Sigma_k$ action in $\C^n_{Y,k}$, where $\sigma \in \Sigma_k$ acts by
$\sigma(\mathcal P,\ell)=(\mathcal P,\sigma^{-1} \circ \ell)$ (i.e. it permutes the distribution of the sets according to $\sigma$).  Then, the homeomorphism mentioned above is equivariant with the usual action of $\Sigma_k$ in $EG^n$.

This homeomorphism readily implies a generalisation of the necklace splitting problem.

\begin{theorem}\label{teorema-dimension-grande}
Let $k$ be a prime power, $t$ a positive integer and $n= (k-1)t$.  Given $t$ probability measures $\mu_1, \mu_2, \ldots, \mu_t$ in $\rr^d$, for any $Y \in V^n_d$, there is a partition in $\C^n_{Y,k}$ of $\rr^d$ into $k$ parts $A_1, A_2, \ldots, A_k$ such that
\[
\mu_j (A_i) = \frac{1}{k}
\]
for each $i,j$.
\end{theorem}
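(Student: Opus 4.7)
The plan is to apply the test map scheme with \"Ozaydin's theorem as the topological engine; the preceding lemma identifying $\C^n_{Y,k} \cong EG^n$ as $\Sigma_k$-spaces does all the geometric work for us.

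First, I would build the natural test map
\begin{align*}
f \colon \C^n_{Y,k} &\to \rr^{t(k-1)}, \\
(\mathcal P, \ell) &\mapsto \Bigl( \mu_j(A_i) - \tfrac{1}{k} \Bigr)_{1 \le j \le t,\, 1 \le i \le k},
\end{align*}
where $A_i = \bigcup_{K \in \mathcal{P},\, \ell(K)=i} K$. For each fixed $j$, the $k$ entries $\mu_j(A_i) - 1/k$ sum to zero, so each block lies in the hyperplane $\{x \in \rr^k : \sum_i x_i = 0\} \cong \rr^{k-1}$. Hence $f$ lands in $(\rr^{k-1})^{\oplus t} \cong \rr^{t(k-1)}$, which is exactly the $\Sigma_k$-representation figuring in \"Ozaydin's theorem (with his parameter $n+1 = t(k-1)+1$).

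Next, I would check that $f$ is continuous and $\Sigma_k$-equivariant. Equivariance is immediate from the definition of the $\Sigma_k$-action on $\C^n_{Y,k}$ (which permutes labels, hence permutes the coordinates $(\mu_j(A_i))_i$ within each block) and the standard permutation action on each $\rr^{k-1}$ factor. Continuity at generic partitions is clear because the $\mu_j$ are absolutely continuous, so coincidences of successive cutting hyperplanes cause no discontinuity. The one genuine subtlety is verifying that $f$ descends to the quotient defining $\C^n_{Y,k}$: when a separating hyperplane $H$ lies at $+\infty$, the parts on the $H^+$ side are empty and contribute nothing to any $\mu_j(A_i)$, so relabelling them does not change $f$; this matches exactly the identifications made in the recursive definition of $\C^n_{Y,k}$. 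Checking this bookkeeping carefully is the main technical step of the proof.

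Finally, I would invoke \"Ozaydin's theorem: since $k$ is a prime power and $\C^n_{Y,k} \cong EG^n$ with $n = t(k-1)$, every $\Sigma_k$-equivariant map $EG^n \to \rr^{t(k-1)}$ must have a zero. Any zero of $f$ is precisely a labelled partition $(\mathcal P, \ell)$ with $\mu_j(A_i) = 1/k$ for every $i,j$, as required. The only real obstacle is the recursive bookkeeping for continuity and well-definedness across the identified boundary of $\C^n_{Y,k}$; once that is in hand, the conclusion follows at once from the preceding lemma and \"Ozaydin's theorem.
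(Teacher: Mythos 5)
Your proposal is correct and follows essentially the same route as the paper: the same test map $(\mathcal P,\ell)\mapsto(\mu_j(A_i)-\tfrac1k)_{i,j}$ into $(\rr^{k-1})^{\oplus t}$, the same use of the lemma identifying $\C^n_{Y,k}$ with $EG^n$ as $\Sigma_k$-spaces, and the same appeal to \"Ozaydin's theorem to force a zero. Your extra care about the map descending through the identifications at hyperplanes at infinity is a point the paper treats implicitly, but it does not change the argument.
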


\begin{proof}
For $1 \le j \le t$, consider the function
\begin{align*}
f_j : \C^n_{Y,k} &\to \rr^{k-1} \\
(\mathcal P,\ell) &\mapsto \left( \mu_j (A_1) - \frac{1}{k}, \mu_j (A_2) - \frac{1}{k}, \dots, \mu_j (A_k) - \frac{1}{k} \right),
\end{align*}
where $A_i$ is the union of the sets with label $i$.  Even though the image of $f_j$ has $k$ coordinates, their sum is always $0$, so the dimension can be considered to be $k-1$.  Notice that $\rr^{k-1}$ has an action of $\Sigma_k$ which simply permutes the $k$ coordinates.  Moreover, each $f_j$ is equivariant with respect to the $\Sigma_k$ action in both spaces.

Thus, the function
\begin{align*}
f: \C^n_{Y,k} &\to  \rr^{t(k-1)} \\
f &= (f_1, f_2, \dots, f_t)
\end{align*}
Is an equivariant function between these two spaces.  If $0$ is in the image of $f$, we have a partition satisfying our condition.  If not, then we have an equivariant map $f: EG^{t(k-1)} \rightarrow \rr^{t(k-1)}\setminus \{ 0 \}$, contradictin \"Ozaydin's theorem.
\end{proof}

If $k$ is prime, then one can complete the proof above using Dold's theorem instead of the stronger result by \"Ozaydin.  For this, consider only the action of $\Z_k$ that simply permutes the distribution cyclically, instead of the whole action of $\Sigma_k$.  If $0$ is not in the image of $f$, we can consider $g = \frac{f}{|| f ||}$, which is an equivariant function from $\C^n_Y$ to $S^{t(k-1)-1}$.  Since $k$ is a prime number, the $\Z_k$-action is free on both spaces.
However, since $\C^n_Y \cong EG^{t(k-1)}$ is $[t(k-1)-1]$-connected and $S^{t(k-1)-1}$ is of dimension $t(k-1)-1$, we would be contradicting Dold's theorem.

Notice that the value of $n$ is optimal. To see this, place each measure $\mu_i$ concentrated at a point $x_i$ so that no vector in the definition of $Y$ is orthogonal to any vector of the form $x_i - x_j$. Then each $x_i$ must be contained in at least $k-1$ hyperplanes.  In the case $(d,k) = (1,2)$ our proof is essentially the same as the one in \cite{Alon:1985cy}.

It would be desirable to remove the condition of $k$ being a prime power in the theorem above, but usual subdivision arguments fail to work in this case.  The typical subdivision argument seem only to work with $d=1$, as the product of two partitions in $\rr$ into $m$ and $n$ parts is a partition of $\rr$ into $m+n-1$ parts.  Moreover, if $k$ is not a prime power, there are examples of equivariant maps between the spaces mentioned at the end of the proof \cite{Oza87, Volovikov:1996up}, so the argument above cannot be extended by trying to improve the topological result.  A paper by de Longueville explains how similar methods are used to tackle some cases of the topological Tverberg theorem \cite{DeLongueville:2001vw}.  It should be noted that even though the topological tools fail in these cases, the mass partition result may still hold for every value of $k$.

\begin{question}
Does Theorem \ref{teorema-dimension-grande} hold for all $k$?
\end{question}

Even with the conditions on $k$, Theorem \ref{teorema-dimension-grande} readily implies a generalisation of the main result in \cite{deLongueville:2006uo}, namely

\begin{theorem}\label{teorema-collaresgrandes}
Let $k, t, d$ be positive integers, and $n=t(k-1)$.  Given $t$ probability measures $\mu_1,\mu_2\dots,\mu_t$ in $\rr^d$ and $n$ directions $v_1, v_2, \ldots, v_n$, there is a partition of $\rr^d$ induced by hyperplanes $H_1,H_2,\dots,H_n$, each orthogonal to the corresponding $v_i$, such that its parts can be distributed among $k$ sets and each obtains $\frac{1}{k}$ of each measure.
\end{theorem}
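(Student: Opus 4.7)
The plan is to reduce to Theorem \ref{teorema-dimension-grande} by encoding the prescribed direction sequence $v_1,\ldots,v_n$ as an element $Y\in V^n_d$. Define $Y$ by a \emph{left-comb} recursion: at every level we take the left component of the triple to be $\emptyset$, while the right component, as one descends the tree, successively introduces the directions in order, so that
\[
Y = (v_1,\emptyset,(v_2,\emptyset,(v_3,\emptyset,\ldots,(v_n,\emptyset,\emptyset)\ldots))) \in V^n_d.
\]
Unwinding, a partition in $\C^n_Y$ is produced by $n$ successive hyperplane cuts in which the $i$-th cut is orthogonal to $v_i$: first cut $\rr^d$ by $H_1 \perp v_1$, leaving $H_1^+$ as a single part; then cut $H_1^-$ by $H_2 \perp v_2$, leaving $H_1^-\cap H_2^+$ as a single part; and so on. This produces $n+1 = t(k-1)+1$ convex parts in total.

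Applying Theorem \ref{teorema-dimension-grande} to this $Y$ (under the prime-power hypothesis on $k$ inherited from \"Ozaydin's theorem) yields a labelled partition $(\mathcal P,\ell)\in \C^n_{Y,k}$ for which the union $A_i$ of the parts bearing label $i$ satisfies $\mu_j(A_i) = 1/k$ for every $i\in[k]$ and every $j\in\{1,\ldots,t\}$.

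To match the wording of the statement, let $H_1,\ldots,H_n$ denote the cutting hyperplanes from the nested construction of $\mathcal P$, each extended to a full hyperplane of $\rr^d$; by construction $H_i \perp v_i$. Every part of $\mathcal P$ is an intersection of half-spaces bounded by some of the $H_i$, so the boundaries of the sets $A_1,\ldots,A_k$ lie in $H_1 \cup \cdots \cup H_n$. Thus $\mathcal P$ is a partition induced by these hyperplanes, and $\{A_1,\ldots,A_k\}$ is the required distribution.

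I expect no substantive obstacle: the family $V^n_d$ was designed in this section precisely to encode prescribed directions in prescribed orders, so any sequence $(v_1,\ldots,v_n)$ can be realised by a suitable $Y$, and Theorem \ref{teorema-dimension-grande} supplies all the topological content. The only caveat is the prime-power hypothesis on $k$ inherited from the previous theorem; extending the statement to arbitrary $k$ appears to require new topological input, since (as remarked after Theorem \ref{teorema-dimension-grande}) the usual subdivision argument does not transfer to $d>1$.
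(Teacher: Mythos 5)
There is a genuine gap: Theorem \ref{teorema-collaresgrandes} is stated for \emph{all} positive integers $k$, with no prime-power hypothesis, and that is precisely its point of difference from Theorem \ref{teorema-dimension-grande}. Your argument (encoding the prescribed directions by a left-comb $Y\in V^n_d$ and invoking Theorem \ref{teorema-dimension-grande}) only establishes the prime-power case, and your closing claim that extending to arbitrary $k$ ``appears to require new topological input'' is exactly what the paper's proof refutes. The remark after Theorem \ref{teorema-dimension-grande} about subdivision arguments failing for $d>1$ concerns the stronger nested statement, where the number of \emph{parts} must stay at $t(k-1)+1$; Theorem \ref{teorema-collaresgrandes} only controls the number of \emph{hyperplanes} and makes no claim about the number of resulting pieces, and for that weaker bookkeeping the subdivision argument does go through in every dimension.

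Concretely, the paper's proof takes the prime case of Theorem \ref{teorema-dimension-grande} as the base (your reduction covers this part) and then shows multiplicativity: if the statement holds for $k=m$ and $k=l$, it holds for $k=ml$. Given $t$ measures and $k=ml$, first use $t(m-1)$ hyperplanes with the prescribed directions to produce sets $B_1,\ldots,B_m$ with $\mu_j(B_i)=1/m$ for all $i,j$; then, for each fixed $i$, apply the statement for $l$ to the restricted measures $\mu_j|_{B_i}$, using $t(l-1)$ further hyperplanes (again with prescribed directions) to split each $B_i$ evenly into $l$ sets. The total number of hyperplanes is $t(m-1)+m\cdot t(l-1)=t(ml-1)=n$, and the combined labelling distributes the parts among $ml$ sets each receiving $1/(ml)$ of every measure. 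Iterating over the prime factorisation of $k$ yields the theorem for all $k$; no topological input beyond the prime (or prime-power) case is needed. Your proof as written should either be restricted to prime-power $k$ (a weaker theorem) or completed by this composition step.
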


\begin{proof}
Notice that Theorem \ref{teorema-dimension-grande} implies Theorem \ref{teorema-collaresgrandes} if $k$ is a prime number.  Thus, it suffices to show that if the result holds for $k=l$ and $k=m$, then it also holds for $k=ml$.

Given $k=ml$ and $t$ measures, consider a partition $\mathcal{P}$ by $t(m-1)$ hyperplanes distributed into $m$ sets $B_1, B_2, \ldots, B_m$ such that $\mu_j (B_i) = 1/m$ for each $i,j$.

Then, for a fixed $1 \le i \le m$, consider the measures $\mu^i_j = \mu_j |_{B_i}$ the restriction to $B_i$.  We can consider a partition $\mathcal{P}_i$ of $B_i$ using $t(l-1)$ hyperplanes and a distribution of the parts into $l$ sets so that each of the measures $\mu^i_j$ for $1 \le j \le t$ is split evenly among the $l$ sets.  Notice that $\cup_{i=1}^k \mathcal{P}_i$ is a partition of $\rr^d$ distributed into $ml$ parts which splits evenly each measure $\mu_j$.  Moreover, the total number of hyperplane cuts used to make it is $t(m-1) + m\cdot t(l-1) = t(ml-1)$, as desired.
\end{proof}

In \cite{deLongueville:2006uo}, the directions of the vectors $v_i$ could only be chosen among $d$ possibilities, though here we can take an arbitrary number of them.  The downside with respect to Theorem \ref{teorema-dimension-grande} is that we do not have a good control over the number of pieces obtained at the end.
\begin{question}
What is the minimum nuber $n=n(t,d,k)$ such that for any $t$ probability measures in $\rr^d$, there is a partition of $\rr^d$ into $n$ convex pieces so that its parts can be distributed into $k$ sets $A_1, A_2, \ldots, A_k$ satisfying
\[
\mu_j (A_i) = \frac1k
\]
for all $i, j$.
\end{question}

If we ask for partitions induced by successive hyperplane cuts with fixed directions of the remaining parts, then for $k$ a prime power Theorem \ref{teorema-dimension-grande} shows that $n = t(k-1)+1$ is optimal.  However, relaxing the conditions on the partitions may allow us to find smaller values of $n$.  For instance, if $d \ge t$, then $n = k$ is sufficient.  This comes from the fact that for any $d$ measures in $\rr^d$, there is a partition of $\rr^d$ into $k$ convex parts so that each part has the same size in each measure.  Different proofs of this result and its generalisations are found in \cite{Soberon:2012kp, Karasev:2014gi, Blagojevic:2014ey}

As expected, Theorem \ref{teorema-dimension-grande} implies the necklace splitting theorem by Alon.  The proof is essentially different.  In \cite{Alon:1987ur}, the topological tool used is one that guarantees the existence of a certain equivariant map, and then this map is used to obtain the partition.  In our setting, we use a topological tool that guarantees the non-existence of a certain map.  Then, if the result was false we obtain a contradiction to this claim.

\section{Unified approach to fair distribution problems}\label{section-karasev}

In this section, we aim to give another fair distribution result which contains all known high-dimensional generalisations of the necklace splitting theorem, including the ones in the previous section.
The partitions we use are generalisations of Voronoi diagrams, similar to the ones in \cite{Karasev:2014gi}.

We consider 
\begin{itemize}
\item 
A topological space $X$ (usually a manifold);
\item
A set of $t$ probabilistic Borel measures $\mu_1, \ldots, \mu_t$ on $X$;
\item 
And $n=t(k-1)+1$ continuous functions $f_1,\ldots, f_n : X\to\mathbb R$.
\end{itemize}

We also need the following technical assumptions: for any constant $c$ and any $i,j\in[1,n]$ the set $\{f_i - f_j = c\}$ has measure zero with respect to any $\mu_j$.

Now we consider a weight-vector $\bar c=(c_1,\dots,c_n)$ and define a partition of $X$ into $n$ sets of the form
$$
V_i(\bar c) = \{x\in X : \forall j\neq i\ f_i(x) + c_i \ge f_j(x) + c_j\}.
$$
If $X=\mathbb R^n$ and the functions $f_i$ are linear then this is just a generalized Voronoi partition into convex parts, also called a regular partition, or affine partition in~\cite{Anonymous:FbJ-jtiN}. Fixing the functions $f_i$ and only varying the constants $c_i$ corresponds to translating the walls between $V_i(\bar c)$ and $V_j(\bar c)$. Regular partitions into convex parts also exist when $X$ is a sphere or the hyperbolic space (see~\cite{Karasev:2014gi}). 

When the space $X$ is not compact, we allow some, but not all, of the $c_i$ to be $-\infty$. This is a sort of compactification that is necessary for the result below, it allows us to make some of the $V_i(\bar c)$ empty.
 
Now we may state the generalised splitting necklace theorem:

\begin{theorem}
\label{theorem:splitting}
Let $k$ be a prime power and $X$, $\{\mu_j\}_{\ell=1}^t$ , $\{f_i\}_{i=1}^n$, $n=t(k-1)+1$ as above.  Then there is a weight-vector $\bar c$ such that the elements $V_i(\bar c)$ of the induced partition can be distributed among $k$ sets $A_1,\ldots, A_k$ with
\[
\mu_j(A_i) = 1/k
\]
for all $i=1,\dots,k$ and $j=1,\dots, t$.
\end{theorem}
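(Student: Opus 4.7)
The plan is to follow the test-map scheme of Theorem \ref{teorema-dimension-grande}: parametrize the space of labeled weight-vector partitions by a space $\Sigma_k$-equivariantly homeomorphic to $EG^{n-1}$, build an equivariant test map into $\rr^{n-1}$, and invoke \"Ozaydin's theorem to force a zero.

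First I would set up the parameter space. Let $\mathcal{P}$ consist of pairs $(\bar c, \ell)$, where $\bar c = (c_1, \dots, c_n) \in [-\infty, \infty)^n$ with at least one $c_i$ finite, and $\ell : [n] \to [k]$, subject to the identifications $(\bar c, \ell) \sim (\bar c + s(1,\dots,1), \ell)$ for any $s \in \rr$ (shifting all constants preserves the partition) and $(\bar c, \ell) \sim (\bar c, \ell')$ whenever $\ell$ and $\ell'$ agree on $\{i : c_i > -\infty\}$ (labels of empty parts are irrelevant). The assignment $(\bar c, \ell) \mapsto \sum_{i=1}^n \lambda_i\, \ell(i)$ with $\lambda_i = e^{c_i}/\sum_j e^{c_j}$ (interpreting $e^{-\infty}=0$) is a $\Sigma_k$-equivariant homeomorphism $\mathcal{P} \to [k]^{*n} = EG^{n-1}$: translating $\bar c$ leaves the $\lambda_i$ unchanged, and $\lambda_i = 0$ precisely when $c_i = -\infty$, matching the join identification.

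Next, for each $j = 1, \dots, t$ I would define $F_j : \mathcal{P} \to \rr^{k-1}$ by $(\bar c, \ell) \mapsto (\mu_j(A_1) - 1/k, \dots, \mu_j(A_k) - 1/k)$, where $A_i = \bigcup_{\alpha \in \ell^{-1}(i)} V_\alpha(\bar c)$; the coordinates sum to zero since the $V_\alpha(\bar c)$ partition $X$ up to a $\mu_j$-null set (the walls). Assembling gives a $\Sigma_k$-equivariant map $F = (F_1, \dots, F_t) : EG^{n-1} \to \rr^{t(k-1)} = \rr^{n-1}$. By \"Ozaydin's theorem, since $k$ is a prime power, $F$ must vanish somewhere, and any zero furnishes a weight vector and labeling with $\mu_j(A_i) = 1/k$ for all $i, j$.

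The main obstacle is verifying that $F$ is continuous on the compactified parameter space. On the interior (all $c_i$ finite), continuity of $\bar c \mapsto \mu_j(V_i(\bar c))$ follows from the hypothesis that each wall $\{f_i - f_\ell = c\}$ has $\mu_j$-measure zero, by a standard monotone-convergence argument applied to the indicator functions of the $V_i(\bar c)$. At a boundary stratum where some $c_i \to -\infty$ while another $c_{i_0}$ stays finite, one must check $\mu_j(V_i(\bar c)) \to 0$, so that the identification of labels on empty parts is respected by $F$: the containment $V_i(\bar c) \subseteq \{x : f_i(x) - f_{i_0}(x) \geq c_{i_0} - c_i\}$ with $c_{i_0} - c_i \to +\infty$ exhibits $V_i(\bar c)$ as a decreasing family of sets with empty intersection (since $f_i - f_{i_0}$ is finite-valued), so $\mu_j(V_i(\bar c)) \to 0$ by continuity of measure from above.
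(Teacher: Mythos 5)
Your proposal is correct and follows the same overall test-map scheme as the paper (configuration space homeomorphic to $EG^{n-1}$, equivariant test map to $\rr^{n-1}$, \"Ozaydin's theorem), but the way you realize the configuration space is genuinely different. The paper first pushes the measures forward to $\rr^n$ via $f=(f_1,\dots,f_n)$, then reparametrizes partitions by the capacities $w_i=\mu(V_i(\bar c))$ with respect to an auxiliary reference measure $\mu$ of full support, citing known results (Aurenhammer et al.\ and \cite{Karasev:2014gi}) that these capacities continuously and bijectively parametrize the regular partitions by the simplex $\Delta^{n-1}$; the labelled simplices are then glued into the join exactly as you do. You instead parametrize the weight vectors themselves, modulo the translation $\bar c\mapsto\bar c+s(1,\dots,1)$ and modulo labels of empty parts, via the explicit normalization $\lambda_i=e^{c_i}/\sum_j e^{c_j}$, and then verify continuity of the measure test map by hand: dominated convergence on the open stratum using the hypothesis that the walls $\{f_i-f_j=c\}$ are null, and the containment $V_i(\bar c)\subseteq\{f_i-f_{i_0}\ge c_{i_0}-c_i\}$ to show disappearing parts carry vanishing measure. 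This is more self-contained (no auxiliary measure, no citation of the capacity-parametrization results, no push-forward reduction), at the cost of doing the continuity analysis yourself; the paper's route buys a cleaner statement that the simplex parametrizes the partitions themselves, which is also convenient for the limiting argument it uses later to recover the nested partitions of Section \ref{section-neckalces}. Two small points to tighten: the convergence argument you invoke is dominated (not monotone) convergence, and at a boundary stratum you should note that, besides $\mu_j(V_i(\bar c))\to 0$ for the parts whose $c_i\to-\infty$, the measures of the surviving parts converge to those of the limit configuration --- which follows by the same a.e.\ convergence of indicators, since for $x$ off the walls of the limit configuration the constraints coming from indices with $c_j\to-\infty$ are eventually satisfied.
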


Let us show how one can infer the other results from this. Noga Alon's theorem follows by considering $X=\rr$, and taking $f_i$ to be linear functions with different slopes.  This gives us a partition of $\rr$ using at most $n-1$ points which can be distributed evenly among $k$ people.  To remove the condition of $k$ being a prime power, a subdivision argument like the one described in Section \ref{section-neckalces} is sufficient.

The result of~\cite{Zivaljevic:2013vla} is obtained by taking $X$ to be the interior of a simplex of dimension $n-1$, $f_i$ to be the logarithm of the distance from $x\in X$ to a hyperplane of the $i$-th facet of $X$.  If $c_i, c_j$ are not both equal to $-\infty$, the boundary between $V_i(\bar c)$ and $V_j (\bar c)$ is the set of points $x$ such that 
\[
\mbox{dist}(x,F_j) = \alpha_{i,j} \cdot \mbox{dist}(x,F_j)
 \]
 where $F_i, F_j$ are the $i$-th and $j$-th facets of the simplex, and $\alpha_{i,j}$ is a non-negative number depending only on $c_i, c_j$.  If $c_i = c_j = -\infty$, $V_i (\bar c)$ and $V_j(\bar c)$ are both empty.  Then, any partition $\{V_i(\bar c)\}$ is just a conical partition of the simplex with the position of the apex depending on $\bar c$.  More precisely,
 
 \begin{corollary}
Suppose $k$ is a prime power, $n= t(k-1)+1$ and $\Delta$ is an $(n-1)$-dimensional simplex.  For any $t$ probability measures $\mu_1, \mu_2, \ldots, \mu_t$ that vanish on every hyperplane, there is a point $x \in \Delta$ such that the conical partition of $\Delta$ induced by the facets of $\Delta$ and common apex $x$ can be distributed among $k$ sets $A_1, A_2, \ldots, A_k$ such that for all $i,j$
\[\mu_j (A_i) = 1/k.
\]
 \end{corollary}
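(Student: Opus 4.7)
My plan is to invoke Theorem \ref{theorem:splitting} with the Voronoi-type data sketched just before the corollary and then recognise the resulting partition as the desired conical one. Set $X = \mathrm{int}(\Delta)$, let $h_i(x)$ denote the distance from $x$ to the affine hyperplane carrying the $i$-th facet $F_i$, and put $f_i(x) = \log h_i(x)$. The first step is to verify the technical hypothesis of Theorem \ref{theorem:splitting}: for any constant $c$ and any pair $i\neq j$, the level set $\{f_i - f_j = c\}$ equals $\{x : h_i(x) = e^c h_j(x)\}$, which is the restriction to $X$ of a single affine hyperplane, and by assumption each $\mu_\ell$ vanishes on it. Theorem~\ref{theorem:splitting} then produces a weight vector $\bar c$ and a distribution of the pieces $V_i(\bar c)$ among sets $A_1,\dots,A_k$ with $\mu_j(A_i) = 1/k$ for every $i,j$.

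The remaining task is to identify $\{V_i(\bar c)\}$ with a conical partition. Writing $\alpha_i = e^{c_i}$ (with $\alpha_i=0$ when $c_i=-\infty$), the defining condition $\log h_i + c_i \ge \log h_j + c_j$ becomes $\alpha_i h_i(x) \ge \alpha_j h_j(x)$ for every $j$, so $V_i(\bar c)$ is the region of $\Delta$ on which the affine function $\alpha_i h_i$ attains the maximum among $\alpha_1 h_1, \dots, \alpha_n h_n$. The boundary between $V_i(\bar c)$ and $V_j(\bar c)$ therefore lies inside the hyperplane $\{\alpha_i h_i = \alpha_j h_j\}$, which automatically contains the $(n-3)$-face $F_i\cap F_j$ because $h_i$ and $h_j$ both vanish there.

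The key geometric observation is that all these separating hyperplanes pass through one common point $x_0\in\mathrm{int}(\Delta)$, namely the unique solution of $\alpha_1 h_1(x)=\cdots=\alpha_n h_n(x)$. In barycentric coordinates $x = \sum \lambda_i v_i$ one has $h_i(x) = h_i(v_i)\lambda_i$, so the system reduces to $\lambda_i \propto 1/(\alpha_i h_i(v_i))$; normalising by $\sum\lambda_i=1$ yields a unique $x_0$ in the interior whenever all $c_i$ are finite, while if some $c_i=-\infty$ the corresponding $V_i$ is empty and the same argument is applied to the remaining indices. Each non-empty $V_i(\bar c)$ is then bounded by a portion of $\partial\Delta$ together with hyperplanes through $x_0$, so the family $\{V_i(\bar c)\}$ is precisely a conical partition of $\Delta$ with common apex $x_0$ induced by the facets.

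The main obstacle is this last geometric identification: checking that the bisector hyperplanes of the functions $\alpha_i h_i$ really do concur at a single interior point $x_0$ and that the resulting cells coincide with the expected cones from $x_0$ over the facets. Up to the sign convention chosen for $f_i$ (the opposite sign interchanges the cone based on $F_i$ with the complementary cone anchored at $v_i$), this is a routine linear-algebra calculation in barycentric coordinates, so no further topological input beyond Theorem \ref{theorem:splitting} is required.
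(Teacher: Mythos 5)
Your proposal follows essentially the same route as the paper: apply Theorem~\ref{theorem:splitting} on $X=\mathrm{int}(\Delta)$ with $f_i$ the logarithm of the distance $h_i$ to the hyperplane of the $i$-th facet, check the measure-zero hypothesis for the sets $\{h_i=e^c h_j\}$ (restrictions of hyperplanes through $F_i\cap F_j$), and identify the cells $V_i(\bar c)$ as a conical partition; your barycentric computation of the common apex $x_0$ (via $h_i(x)=\lambda_i h_i(v_i)$) is in fact more explicit than the paper's one-paragraph derivation, which simply asserts the identification. The one point you should nail down rather than leave as a ``sign convention'' remark: with $f_i=+\log h_i$ the cell $V_i(\bar c)$ is where the scaled distance $\alpha_i h_i$ is \emph{maximal}, and that cell is the cone from $x_0$ over the portion of $\partial\Delta$ surrounding the vertex $v_i$ (a quadrilateral in the triangle case), not the cone over the facet $F_i$; so the family $\{V_i(\bar c)\}$ is a conical partition with apex $x_0$, but not literally the one ``induced by the facets'' that the corollary asserts. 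Taking $f_i=-\log h_i$ instead, $V_i(\bar c)=\{x:\beta_i h_i(x)\le \beta_j h_j(x)\ \forall j\}$ with $\beta_i=e^{-c_i}$, and this set is exactly $\mathrm{conv}(\{x_0\}\cup F_i)$, which is what the statement requires; the degenerate values $c_i=-\infty$ then correspond to the apex lying on the boundary of $\Delta$, consistent with ``$x\in\Delta$''. With that sign fixed, your argument is complete and matches the paper's.
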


The result of Section \ref{section-neckalces} needs more explanation.  The space of nested partitions by hyperplanes with fixed directions cannot be obtained directly by the construction above. However,~\cite[Lemma~3]{Anonymous:FbJ-jtiN} shows that any iterated regular partition is a limit of (non-iterated) regular partitions. So the result for nested partitions into $n=t(k-1)+1$ parts with prescribed directions of hyperplanes also follows by an appropriate limiting argument.

\begin{proof}[Proof of Theorem~\ref{theorem:splitting}]

First, we consider the functions $f_i$ as a map $f : X\to \mathbb R^n$. This map is continuous and therefore Borel, so it pushes the Borel measures $\mu_j$ forward to the space $\mathbb R^n$. In the rest of the proof we restrict ourselves to the following special case: $X=\mathbb R^n$, $f_i$ are its coordinate functions, and $\mu_j$ are Borel probability measures that assign zero to any hyperplane defined by an equation of the form $f_i-f_j=c$.

Let us take a different parameter instead of $\bar c\in \mathbb \rr^n$. Choose a probability measure $\mu$ on $X$ whose support is the whole space $X$, which is also zero on every set of the form $\{f_i - f_j = c\}$ and such that $\mu(A) \neq 0$ for every non-empty open set $A \subset X$. For each $i$, consider
\[
w_i = \mu(V_i(\bar c)). 
\]
It is known (see~\cite{Aurenhammer:1998tj} and \cite{Karasev:2014gi}) that these $w_i$'s continuously parameterise the space of all partitions of the form $\{V_i(\bar c)\}$, which we now denote by $\{V_i(\bar w)\}$.  The non-vanishing condition on the measure $\mu$ is essential for this part. All these coordinates are nonnegative and their sum is $1$. The possibility of having $c_i=-\infty$ in the case when $X$ is not compact also ensures that any collection (but not all) of $w_i$'s can be made zero. So the space of such partitions becomes a simplex $\Delta^{n-1}$. 

If we augment the coordinates $(w_i)$ with the assignment of a number $n_i\in[k]$ to each of the sets $\{V_i(\bar w)\}$, then we obtain the first approximation to the configurations space of our problem, which is the union of $k^n$ disjoint copies of the $(n-1)$-dimensional simplex $\Delta^{n-1}$. We then take the quotient of this space by the following relation: If $w_i=0$ (that is, $V_i(\bar w)$ is empty) then we ignore the assignment of $n_i$ to this set.

Now the simplices glue together and give the $n$-fold join $[k]*\dots *[k]$ of $k$-element sets. As in Section \ref{section-neckalces}, this space is simply $EG^{n-1}$ together with the natural action of $\Sigma_k$.

The sums 
\[
\phi_{\ell, j} =\sum_{n_i = \ell} \mu_j(V_i(\bar w)) - 1/k
\]
arrange into a $\Sigma_k$-equivariant map $\phi : EG^{n-1} \to \rr^{n-1}$, exactly as in the proof of Theorem \ref{teorema-dimension-grande}.  Thus, we know that if $k$ is a prime power, the map $\phi$ must have a zero.  This is equivalent to the existence of the certain partition $\{V_i(\bar w)\}$ and the assignment $\{n_i\}$ giving the conclusion of the theorem.
\end{proof}

In \cite{Karasev:2014gi}, similar partitions are used. More freedom is given to the set of functions, which allows for generalisations of the ham-sandwich theorem where the directions are not fixed.

\section{High-dimensional chessboards}\label{seccion-ajedrez}

One of the disadvantages of the theorems in the previous sections is that, after splitting $\rr^d$, we have no control over how the distribution is made among the $k$ thieves.  In \cite{Alon:1985cy}, a result with more control is shown in the cube $[0,1]^d$, where the distribution is made by a chessboard colouring.

To be precise, given $r+1$ numbers $0=z_0< z_1 < z_2 < \ldots < z_{r-1} < z_r =1$, they define the intervals $I_i = [z_i, z_{i+1}]$ for $0\le i \le r-1$.  They call the partition of $[0,1]^d$ given by sets of the form $I_{n_1} \times I_{n_2} \times \cdots \times I_{n_d}$ a partition of size $r-1$.  They show that for any $k$ measures in $[0,1]^d$, a chessboard colouring of a partition of size at most $k$ can split them evenly at the same time if $d$ is odd.

However, in this result one has the condition that the cube is being split the same way in each direction.  We would like to know the complexity of chessboard colourings where the inducing hyperplanes have more freedom.

Given an ordered $t$-tuple of positive integers $n=(n_1, n_2, \ldots, n_t)$ and a $t$-tuple $v= (v_1, v_2, \ldots, v_t)$ of directions in $\rr^d$, we say that a partition $(A,B)$ of $\rr^d$ is an $(n,v)$-chessboard colouring if it can be obtained by taking at most $n_i$ hyperplanes orthogonal to $v_i$ for each $i$ and using the resulting family to give a chessboard colouring of $\rr^d$ (i.e. any two regions that share a boundary of dimension $d-1$ are in different sets).

\begin{figure}[htc]
\centering
\includegraphics[]{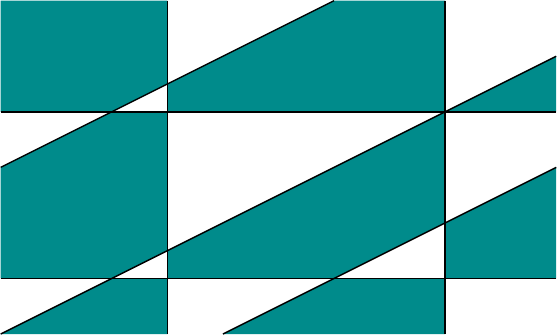}
\caption{Example of a $2$-dimensional chessboard using the vector $(3,2,2)$}
\label{fig-ajedrez}
\end{figure}

Let $S(n)$ be the sum of the entries of $n$.
\begin{question}
Determine the set $\mathcal{N}$ of $t$-tuples $n$ such that for every $t$-tuple $v$ of directions in $\rr^d$ and every $S(n)$ measures in $\rr^d$ there is an $(n,v)$-chessboard colouring $(A,B)$ of $\rr^d$ which splits each measure in half.
\end{question}

Note that $S(n)$ is the maximum number of measure we could hope to be able to split.  It is somewhat surprising that $\mathcal{N}$ is not the set of all $t$-tuples of positive integers.  In the next section we show that $(1,1) \not\in \mathcal{N}$.

\begin{theorem}\label{teorema-ajedrez}
Let $n= (n_1, n_2, \ldots, n_t)$.  If the multinomial coefficient
\[
{S(n)}\choose{n_1, n_2, \ldots, n_k}
\]
is odd, then $n \in \mathcal{N}$.
\end{theorem}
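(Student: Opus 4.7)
The plan is to parametrise the space of $(n,v)$-chessboard colourings by the product of spheres $X:=S^{n_1}\times\cdots\times S^{n_t}$ and to rule out the nonvanishing of a natural test map by an equivariant Borsuk--Ulam argument with the group $G=\Z_2^t$. For each direction $v_i$, a point $\alpha_i=(\alpha_{i,0},\dots,\alpha_{i,n_i})\in S^{n_i}$ encodes $n_i$ hyperplanes orthogonal to $v_i$: after a fixed identification of $\rr$ with a bounded interval, lay out $n_i+1$ consecutive slabs $J_{i,0},\dots,J_{i,n_i}$ of widths $\alpha_{i,0}^2,\dots,\alpha_{i,n_i}^2$. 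Colour the cell $C_{j_1,\dots,j_t}=J_{1,j_1}\cap\cdots\cap J_{t,j_t}$ by $A$ if $\mathrm{sign}(\alpha_{1,j_1}\cdots\alpha_{t,j_t})=+1$ and by $B$ otherwise. The result is a ``product'' colouring that is turned into a genuine $(n,v)$-chessboard by merging neighbouring slabs of the same sign in the same direction; each direction $i$ then contributes at most $n_i$ sign changes, and adjacent cells in the merged arrangement differ in the sign of exactly one coordinate, so the final partition is a chessboard with at most $n_i$ hyperplanes orthogonal to $v_i$ and with the same sets $A,B$.

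Define the test map
\[
f:X\to\rr^{S(n)},\qquad f(\alpha)=\bigl(\mu_j(A)-\tfrac12\bigr)_{j=1}^{S(n)};
\]
a zero yields the desired partition. The group $G=\Z_2^t$ acts on $X$ antipodally on each factor, and its generator $\sigma_i$ (negating $\alpha_i$) multiplies the sign of every cell by $-1$, swapping $A$ and $B$. Hence $f(\sigma\cdot\alpha)=(-1)^{\epsilon(\sigma)}f(\alpha)$, where $\epsilon:G\to\Z_2$ is the sum-of-coordinates homomorphism, so $f$ is $G$-equivariant with $G$ acting on $\rr^{S(n)}$ antipodally through $\epsilon$.

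If $f$ were nowhere zero, $f/\|f\|$ would be a $G$-equivariant map $X\to S^{S(n)-1}$, and I would forbid this via mod-$2$ Borel cohomology. Since each factor $S^{n_i}$ carries a free antipodal $\Z_2$-action, $EG\times_G X\simeq \rr P^{n_1}\times\cdots\times\rr P^{n_t}$, giving
\[
H^*_G(X;\Z_2)\;=\;\Z_2[a_1,\dots,a_t]\bigl/(a_1^{n_1+1},\dots,a_t^{n_t+1}).
\]
The target $S^{S(n)-1}$ is the unit sphere of the $G$-representation $\epsilon^{\oplus S(n)}$, whose equivariant Euler class is $(a_1+\cdots+a_t)^{S(n)}$ and which vanishes in $H^*_G(S^{S(n)-1};\Z_2)$. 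In $H^*_G(X;\Z_2)$ every monomial $a_1^{k_1}\cdots a_t^{k_t}$ with some $k_i>n_i$ dies, and the constraint $k_1+\cdots+k_t=S(n)=n_1+\cdots+n_t$ with $k_i\le n_i$ forces $k_i=n_i$ for all $i$. Therefore
\[
(a_1+\cdots+a_t)^{S(n)}\;=\;\binom{S(n)}{n_1,\dots,n_t}\,a_1^{n_1}\cdots a_t^{n_t}\quad\text{in }H^*_G(X;\Z_2),
\]
which is nonzero precisely when the multinomial coefficient is odd. Under that hypothesis a nonzero class would pull back from a zero class, a contradiction; hence $f$ vanishes and the theorem follows.

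The main obstacle is the first step: checking that the parametrisation is continuous at degenerate configurations (zero coordinates $\alpha_{i,j}=0$, hyperplanes at $\pm\infty$, coincident hyperplanes), that the $\Z_2^t$-action descends to the antipodal action on $\rr^{S(n)}$ exactly via $\epsilon$, and that the merging procedure produces a bona fide $(n,v)$-chessboard without altering $A$ or $B$. Once these are in place the cohomological computation is a direct Fadell--Husseini-style calculation, and the parity of the multinomial coefficient is the only arithmetic input.
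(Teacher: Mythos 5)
Your proposal is correct and follows essentially the same scheme as the paper: the same configuration space $S^{n_1}\times\cdots\times S^{n_t}$ (the paper realises it as $\C^{n_1}_{Y_1,2}\times\cdots\times\C^{n_t}_{Y_t,2}$ using the machinery of Section 3, your slab-width coordinates are an explicit version of the same homeomorphism), the same parity/XOR rule for combining the $t$ one-direction colourings into a chessboard colouring, and the same test map $f$ with the same $\Z_2^t$-equivariance through the sum-of-coordinates character. The one genuine difference is the final topological step: the paper invokes the results of Fadell--Husseini and Ramos as a black box to conclude that $f$ must vanish when $\binom{S(n)}{n_1,\ldots,n_t}$ is odd, whereas you prove the needed special case directly, computing $H^*_{\Z_2^t}(X;\Z_2)\cong\Z_2[a_1,\ldots,a_t]/(a_1^{n_1+1},\ldots,a_t^{n_t+1})$ (the action is free, so this is the cohomology of a product of projective spaces) and noting that a nowhere-zero $f$ would force the equivariant Euler class $(a_1+\cdots+a_t)^{S(n)}=\binom{S(n)}{n_1,\ldots,n_t}\,a_1^{n_1}\cdots a_t^{n_t}$ to vanish there, contradicting the oddness hypothesis. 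This buys self-containedness and makes transparent where the multinomial parity enters, at the cost of redoing a computation the paper outsources; conversely, the citation route gives the paper access to Ramos' more general statement (mixed equivariant/invariant coordinates), which it exploits in the discussion following the proof. Your flagged technical points (continuity at degenerate configurations, hyperplanes at infinity, the merging of coincident hyperplanes) are exactly the identifications already built into the spaces $\C^{n}_{Y,k}$ in Section 3, so they are handled the same way in the paper.
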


The condition above holds if and only if $n_i$ and $n_j$ do not share a $1$ in the same position in their binary expansions.  Moreover, for $n=(n_1, n_2, \ldots, n_t) \not\in \mathcal{N}$, the result above also gives a lower bound for the maximum number $L$ of measures that can be split by some $(n,v)$-chessboard colouring for a prescribed $v$.  Namely, it suffices to find integers $m_1 \le n_1, m_2 \le n_2, \ldots, m_t \le n_t$ such that $m=(m_1, m_2, \ldots, m_t) \in \mathcal{N}$.  Then, $S(m)$ is a lower bound for the number of measures we can split with the $t$-tuple $n$.  We would like to thank Albert Haase, Florian Frick and Pavle Blagojevi\'c for pointing out these facts.

\begin{proof}[Proof of Theorem \ref{teorema-ajedrez}]
For each $i$, let $Y_i \in V^{n_i}_d$ be a set formed by only taking the direction $v_i$ in all instances of the recursive definition of $Y_i$.  Notice that $\C^{n_i}_{Y_i,2}$ is the set of chessboards colourings $(A_i, B_i)$ using at most $n_i$ hyperplanes all orthogonal to $v_i$ (if the colour of two consecutive sections is the same, we can ignore the hyperplane dividing them).

Now consider the space $\C_0 = \C^{n_1}_{Y_1,2} \times \C^{n_2}_{Y_2,2} \times \ldots \times \C^{n_t}_{Y_t,2} \cong S^{n_1} \times S^{n_2} \times \cdots \times S^{n_t}$.  Every element of this set induces an $(n,v)$-chessboard colouring $(A,B)$ by simply considering
\begin{align*}
A & = \{ x \in \rr^d : x \in A_i \ \mbox{for an odd number of }i\} \\
B & = \{ x \in \rr^d : x \in A_i \ \mbox{for an even number of }i\}
\end{align*}

Moreover, notice that flipping any particular $(A_i, B_i)$ by $(B_i, A_i)$ automatically flips $(A,B)$ as well.

If $\mu_1, \mu_2, \ldots, \mu_{S(n)}$ are probability measures, consider the function
\begin{align*}
f: \C_0 &\rightarrow  \rr^{S(n)} \\
(A,B) &\mapsto \left( \mu_1 (A) - \frac12, \mu_2(A) - \frac12, \ldots, \mu_{S(n)}(A)-\frac12 \right)
\end{align*}

Notice that $f(A,B) = -f(B,A)$.  Thus we have a function from a products of spheres to $\rr^d$ which is equivariant with respect to the action of $\Z_2$ on each of the spheres of the product.  The conditions that guarantee a zero of functions of this kind have been studied by Fadell and Husseini \cite[Example~3.3 and Section 5]{fadell1988ideal} and later made explicit by Ramos \cite[Theorem~3.1]{Ramos:1996dm}.  Ramos' result applied to our setting implies that the function above has a zero if the multinomial coefficient ${S(n)}\choose{n_1, n_2, \ldots, n_k}$ is odd, and a zero of this function is precisely what we want.
\end{proof}

For an algorithm that computes the index described in \cite{fadell1988ideal} that works for our setting, we recommend \cite[Section 3.4]{blagojevic2011ideal}.  In \cite{Ramos:1996dm}, Theorem 3.1 describes the existence of zeroes of a much larger class of functions.  Namely, he considers functions where the action on each sphere may be equivariant or stable (i.e. $f(x) = f(-x)$) on each coordinate of the image independently, prescribed in advance by a $0,1$ matrix.  The guarantee of a zero in these functions is given in terms of the parity of a certain permanent.

The full power of Ramos' result can be used in this setting, where each measure $\mu_j$ is to be split using only the hyperplanes in some of the directions (possibly not all), prescribed in advance.  For example, given our set of hyperplanes $\mu_1$ might be split using only the hyperplanes in directions $v_1$ and $v_3$, while $\mu_2$ might be split using only the hyperplanes in directions $v_2$ and $v_3$.

The proof method to guarantee that simultaneously each measure is split by half by its corresponding partition is essentially the same as the one presented above, so we refer the reader to \cite{Ramos:1996dm} for the conditions needed on $n$.  However, this result seems quite artificial compared to the partitions in Theorem \ref{teorema-ajedrez}.

It should be clear that if $(n_1, n_2, \ldots, n_t) \in \mathcal{N}$, then the $(t-1)$-tuple $(n_1 + n_2, n_3, \ldots, n_t) \in \mathcal{N}$.  However, using this last observation does not improve Theorem \ref{teorema-ajedrez}.

\section{Negative results}\label{section-negative}

In this section, we exhibit two examples showing that sometimes we cannot split as many measures as we would hope from the number of degrees of freedom involved.

\begin{claim}
The $2$-tuple $(1,1)$ is not in $\mathcal{N}$.
\end{claim}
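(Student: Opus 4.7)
The plan is to produce explicit measures $\mu_1,\mu_2$ in $\rr^2$ and directions $v_1,v_2$ for which no $(1,1)$-chessboard bisects both simultaneously. Taking $v_1=e_1$ and $v_2=e_2$, a chessboard is determined by a pair $(a,b)\in[-\infty,\infty]^2$ together with a $\Z_2$ colour choice, and a valid bisection is a zero of the $\Z_2$-equivariant map
\[
F(a,b)=\bigl(\mu_1(A)-\tfrac12,\,\mu_2(A)-\tfrac12\bigr),\qquad A=\{(x,y):(x-a)(y-b)>0\}.
\]
The goal is to exhibit $\mu_1,\mu_2$ for which $F$ misses the origin on the whole configuration space $S^1\times S^1$.

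Since the multinomial coefficient $\binom{2}{1,1}=2$ is even, Theorem \ref{teorema-ajedrez} offers no guarantee for these $n$ and $v$, and no abstract topological obstruction forces $F$ to have a zero: such equivariant maps $S^1\times S^1\to\rr^2\setminus\{0\}$ are known to exist via the Fadell--Husseini index calculations underlying Ramos's theorem. The task is thus to realise such an $F$ by actual probability measures rather than to rule it out abstractly.

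Concretely, I would take $\mu_1$ and $\mu_2$ supported on small discs around finite point configurations $P_1,P_2$ in general position, with no $x$- or $y$-coordinate shared between a point of $P_1$ and a point of $P_2$, and with weights chosen so that for every chessboard the sign pattern on $P_1$ yielding $\mu_1(A)=\tfrac12$ is geometrically incompatible with the pattern on $P_2$ yielding $\mu_2(A)=\tfrac12$. The $(a,b)$-plane is then cut into a finite grid by the coordinates of $P_1\cup P_2$; on each region $F$ is essentially constant, on each single-line transition (one line sweeping through one support disc) $F$ traces a one-parameter family, and at each grid vertex (two lines sweeping simultaneously through two support discs) a two-parameter family. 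The main obstacle is ensuring that none of these discrete values and none of the one- or two-parameter transitional families contain the origin; the two-parameter families at grid vertices are the delicate case, because coincidental coordinates can reintroduce a zero, as happens for many natural symmetric configurations. For a sufficiently asymmetric choice of $P_1,P_2$ and of the weights the avoidance can be verified by a direct finite case analysis, which produces the claimed counterexample and so establishes $(1,1)\notin\mathcal{N}$.
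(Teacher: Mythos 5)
Your framing is right: you correctly reduce the claim to exhibiting two measures for which the equivariant map on the configuration space of one vertical and one horizontal line misses the origin, and you correctly observe that since the relevant obstruction vanishes (the multinomial coefficient is even), no abstract argument can substitute for a concrete construction. But the proposal stops exactly where the proof has to begin. You never specify the point configurations $P_1,P_2$ or the weights, and the decisive step --- checking that no chessboard colouring bisects both measures, including along the one- and two-parameter transitional families you yourself single out as the delicate case --- is deferred to a ``direct finite case analysis'' that is never carried out. That verification is the entire content of the claim, and it is not automatic for a ``sufficiently asymmetric'' choice: the bisecting loci $Z_1$ and $Z_2$ of the two measures are one-dimensional closed sets in the $(a,b)$-configuration space, and whether they intersect depends genuinely on the geometry of the supports; the vanishing of the obstruction only says that disjointness is not forbidden, not that it holds generically or for some unspecified asymmetric configuration. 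Moreover, on the transitional strata (a line sweeping through a support disc) the value $\mu_i(A)$ varies continuously and can cross $\tfrac12$ even when no static subset of the weights sums to $\tfrac12$, so sign-pattern bookkeeping on $P_1$ and $P_2$ alone cannot close the argument.

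For comparison, the paper's proof succeeds precisely by making the loci explicitly describable: take $\mu_1$ uniform on a segment of positive slope, so that the locus $Z_1$ of intersection points $(x,y)$ of the two lines giving an even split of $\mu_1$ consists of two paths, each a union of three straight segments determined by the midpoint and endpoints of the segment; then let $\mu_2$ be a small translate of $\mu_1$ in the direction orthogonal to the segment, so that the analogous locus $Z_2$ is the corresponding translate and is visibly disjoint from $Z_1$. An approximation of these singular measures by absolutely continuous ones finishes the proof. To repair your writeup you would need either to supply an explicit configuration together with the full analysis of all strata, or to adopt a construction of this kind in which $Z_1$ and $Z_2$ are simple enough to be written down and separated by inspection.
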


\begin{proof}
It suffices to exhibit two measures in $\rr^2$ which cannot be split evenly at the same time by a chessboard colouring using a horizontal line and a vertical one. The two lines are determined by their intersection point $(x,y)$.

Consider the measure $\mu_1$ to be concentrated uniformly in a segment with positive slope.
Then the locus $Z_1$ of points $(x,y)$ at the intersection of the two lines such that the induced chessboard colouring splits $\mu_1$ evenly is easy to describe.  It is composed of two paths, each made by $3$ straight segments (see Figure \ref{fig-(1,1)} (a)).

To define $\mu_2$, simply translate $\mu_1$ slightly in the direction orthogonal to the segment defining $\mu_1$. Then $Z_1$ and the analogous locus $Z_2$ corresponding to $\mu_2$ do not intersect (see Figure \ref{fig-(1,1)} (b)).

\begin{figure}[htc]
\centering
\includegraphics{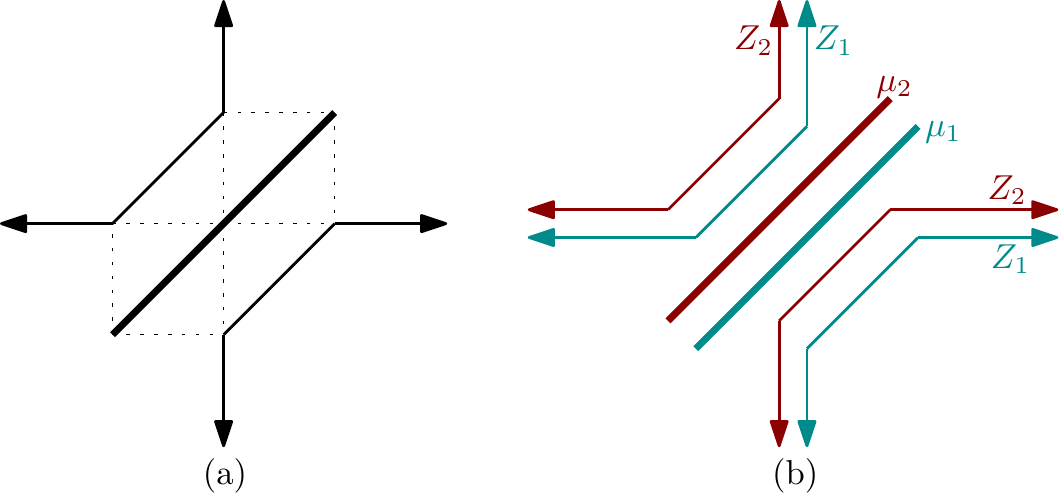}
\caption{On the left, the dotted lines go through the midpoint or endpoints of the segment describing $\mu_1$. On the right, after translating, the loci $Z_1$ and $Z_2$ do not intersect.}
\label{fig-(1,1)}
\end{figure}

Sufficiently close approximations of $\mu_1$ and $\mu_2$ implies the result for absolutely continuous measures.
\end{proof}

Our second example is regarding a high-dimensional extension of the results of Section \ref{section-paths}.  Even though the polynomial ham sandwich works in higher dimensions, the proper way to emulate algebraic surfaces of a given degree by surfaces made of hyperplane parts of fixed directions is unclear.

In the plane, the case with $1$ turn basically says that for any $2$ measures there is the translation of a quadrant that contains exactly half of each measure.  Divisions by a vertical or horizontal line can be thought as subcases where the vertex of the quadrant is at infinity.

Thus, one can ask the same question for $\rr^d$, since orthants have $d$ degrees of freedom.  In other words, for any $d$ measures one might expect to find an orthant which contains exactly half of each.  It is surprising that this is not true.

\begin{claim}
For any fixed $d$, there are $3$ measures such that no orthant contains exactly half of each.
\end{claim}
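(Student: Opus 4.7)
The plan is to concentrate the three measures on tiny balls whose centres are arranged so that the middle centre is the coordinate-wise median of the three. Fix points $p_1, p_2, p_3 \in \rr^d$ with $(p_1)_k < (p_2)_k < (p_3)_k$ for every coordinate $k$, and choose a radius $\delta$ much smaller than every coordinate gap $|(p_i)_k - (p_j)_k|$. For concreteness one may take
\[
p_1 = (0, \epsilon, 2\epsilon, \dots, (d-1)\epsilon), \quad p_2 = (1, 2, \dots, d), \quad p_3 = (2d, 2d+\epsilon, \dots, 2d + (d-1)\epsilon),
\]
with $\epsilon$ and $\delta$ both much smaller than $1$. Let $\mu_i$ be the uniform probability measure on the ball $B(p_i, \delta)$.

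Assume, for contradiction, that some orthant $O = \{x \in \rr^d : \epsilon_k (x_k - a_k) \ge 0 \text{ for all } k\}$ satisfies $\mu_i(O) = 1/2$ for every $i$. The crux of the argument is to focus on the middle measure $\mu_2$. If no coordinate hyperplane $x_k = a_k$ meets $B(p_2, \delta)$, then this ball lies in a single cell of the coordinate hyperplane arrangement and is therefore either wholly inside or wholly outside $O$, so $\mu_2(O) \in \{0, 1\}$, which is impossible. Hence there exists a coordinate $k$ with $|a_k - (p_2)_k| < \delta$.

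Fix such a $k$ and consider the sign $\epsilon_k$. Because $|(p_1)_k - (p_2)_k|$ and $|(p_3)_k - (p_2)_k|$ both vastly exceed $\delta$, the hyperplane $x_k = a_k$ sits far from both $B(p_1, \delta)$ and $B(p_3, \delta)$; and since $(p_1)_k < (p_2)_k < (p_3)_k$, these two balls are respectively contained in $\{x_k < a_k\}$ and $\{x_k > a_k\}$. Consequently, if $\epsilon_k = +1$ the orthant constraint $x_k \ge a_k$ excludes $B(p_1, \delta)$ entirely, giving $\mu_1(O) = 0$; and if $\epsilon_k = -1$ it excludes $B(p_3, \delta)$, giving $\mu_3(O) = 0$. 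Either outcome contradicts $\mu_i(O) = 1/2$ for all three $i$.

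The main obstacle is picking the right reduction: instead of directly analysing how $O$ slices each of the three balls or counting degrees of freedom in the apex and the sign vector (which naively suggests there is plenty of room for $d$ large), one notices that halving the median measure $\mu_2$ already forces a coordinate cut near $p_2$, and that single cut, by the coordinate-wise medianness of $p_2$, cannot simultaneously keep both flanking balls inside $O$ no matter how $\epsilon_k$ is chosen. This obstruction is intrinsic to the configuration and persists in every dimension $d$.
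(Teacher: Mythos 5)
Your proof is correct and follows essentially the same route as the paper: the paper concentrates the three measures near $v$, $2v$, $3v$ for a direction $v$ with no zero coordinates, which is exactly a coordinate-wise increasing triple like your $p_1,p_2,p_3$, and the obstruction is the same one you isolate — the middle point would have to lie essentially on the boundary of the orthant while both flanking points stay on the correct side, which is impossible. Your write-up is in fact more explicit than the paper's short sketch: by using uniform measures on small balls you avoid the approximation step, and you spell out the key reduction that $\mu_2(O)=1/2$ forces some facet hyperplane $x_k=a_k$ to pass within $\delta$ of $p_2$, after which either choice of the sign $\epsilon_k$ excludes one of the flanking balls from $O$ entirely.
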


\begin{proof}
Consider any direction $v$ which is not parallel to any vector of the standard basis of $\rr^d$.  Take measures $\mu_1$, $\mu_2$, $\mu_3$ concentrated near the points $v, 2v, 3v$, respectively.  Notice that no orthant can have these $3$ points in its boundary.  Thus, a sufficiently close approximation of these measures by absolutely continuous measures gives the desired construction.
\end{proof}

However, the following question remains interesting.
\begin{question}
Is it possible to find a ``fixed directions'' analogue of the polynomial ham sandwich theorem in high dimensions?
\end{question}

\section{Acknowledgments}

The authors would like to thank Ruy Fabila-Monroy for bringing the question of paths with few turns to our attention, and to Pavle Blagojevi\'c, Florian Frick and Albert Haase for the stimulating discussions and clarifications regarding the topological tools needed for this work.

\bibliographystyle{amsalpha}

\bibliography{references}

\noindent R. Karasev \\
\textsc{
Moscow Institute of Physics and Technology \\
Institutskiy per. 9, Dolgoprudny \\
Russia 141700}\\[0.3cm]
\noindent E. Rold\'an-Pensado \\
\textsc{
Instituto de Matem\'aticass, Unidad Juriquilla \\
Universidad Nacional Aut\'onoma de M\'exico \\
Juriquilla Quer\'etaro 76230, M\'exico}\\[0.3cm]
\noindent P. Sober\'on \\
\textsc{
Mathematics Department \\
University of Michigan \\
Ann Arbor, MI 48109-1043
}\\[0.3cm]

\noindent \textit{E-mail addresses: }\texttt{r\_n\_karasev@mail.ru, e.roldan@im.unam.mx, psoberon@umich.edu}

\end{document}